\newcommand\reallywidehat[1]{%
	\savestack{\tmpbox}{\stretchto{%
			\scaleto{%
				\scalerel*[\widthof{\ensuremath{#1}}]{\kern-.6pt\bigwedge\kern-.6pt}%
				{\rule[-\textheight/2]{1ex}{\textheight}}
			}{\textheight}%
		}{0.5ex}}%
	\stackon[1pt]{#1}{\tmpbox}%
}
\renewcommand*{\backref}[1]{}
\renewcommand*{\backrefalt}[4]{%
	\ifcase #1 (Not cited.)%
	\or        (Cited on page~#2.)%
	\else      (Cited on pages~#2.)%
	\fi}
\newcommand{\Lg}{\mathfrak{g}}
\newcommand{\La}{\mathfrak{a}}
\newcommand{\Ln}{\mathfrak{n}}
\newcommand{\Lr}{\mathfrak{r}}
\newcommand{\Lt}{\mathfrak{t}}
\newcommand{\Lh}{\mathfrak{h}}
\numberwithin{equation}{section}
\def\eqref#1{(\ref{#1})}
\newcommand{\N}{{\mathbb N}}
\newcommand{\Z}{{\mathbb Z}}
\newcommand{\C}{{\mathbb C}}
\newcommand{\R}{{\mathbb R}}
\newcommand{\CP}{{\mathbb{CP}}}
\newcommand{\del}{\partial}
\def\1{\sqrt{-1}\:}
\newcommand{\cntrct}                
{\hspace{2pt}\raisebox{1pt}{\text{$\lrcorner$}}\hspace{2pt}}
\newcommand{\End}{\operatorname{End}}
\newcommand{\id}{\operatorname{\text{\sf id}}}
\newcommand{\Aut}{\operatorname{Aut}}
\renewcommand{\dim}{\operatorname{dim}}
\renewcommand{\Re}{\operatorname{Re}}
\newcounter{Mycounter}[section]
\newcounter{lemma}[section]
\newcounter{claim}[section]
\newcounter{sublemma}[section]
\newcounter{corollary}[section]
\newcounter{theorem}[section]
\newcounter{conjecture}[section]
\newcounter{proposition}[section]
\newcounter{definition}[section]
\newcounter{example}[section]
\newcounter{remark}[section]
\newcounter{problem}[section]
\newcounter{question}[section]
\DeclareRobustCommand*{\mfaktor}[3][]
{
   { \mathpalette{\mfaktor@impl@}{{#1}{#2}{#3}} }
}
\newcommand*{\mfaktor@impl@}[2]{\mfaktor@impl#1#2}
\newcommand*{\mfaktor@impl}[4]{
   \settoheight{\faktor@zaehlerhoehe}{\ensuremath{#1#2{#3}}}%
   \settoheight{\faktor@nennerhoehe}{\ensuremath{#1#2{#4}}}%
      \raisebox{-0.5\faktor@zaehlerhoehe}{\ensuremath{#1#2{#3}}}%
      \mkern-4mu\diagdown\mkern-5mu%
      \raisebox{0.5\faktor@nennerhoehe}{\ensuremath{#1#2{#4}}}%
}
\tikzset{join/.code=\tikzset{after node path={%
			\ifx\tikzchainprevious\pgfutil@empty\else(\tikzchainprevious)%
			edge[every join]#1(\tikzchaincurrent)\fi}}}
\tikzset{>=stealth',every on chain/.append style={join},
	every join/.style={->}}
\newtheorem*{rep@theorem}{\rep@title}
\newcommand{\newreptheorem}[2]{%
	\newenvironment{rep#1}[1]{%
		\def\rep@title{\ref{##1}}%
		\begin{rep@theorem}}%
		{\end{rep@theorem}}}
\newtheoremstyle{named}{}{}{\itshape}{}{\bfseries}{.}{.5em}{\thmnote{#3's }#1}
\theoremstyle{named}
\begin{document}
	
	\newpage
	
	\title{On some properties of Hopf manifolds}

  \author{Nicolina Istrati}

\address{Nicolina Istrati \newline
		\textsc{\indent Département de mathématiques, Faculté des Sciences, Université d'Angers\newline 
			\indent 2 Boulevard Lavoisier
49000 Angers, France \newline}}
	\email{nicolina.istrati@univ-angers.fr}

\author{Alexandra Otiman}
	\address{Alexandra Otiman \newline
		\textsc{\indent Institut for Matematik, Aarhus University\newline 
			\indent 8000, Aarhus C, Denmark\newline
			\indent \indent and\newline
			\indent Institute of Mathematics ``Simion Stoilow'' of the Romanian Academy\newline 
			\indent 21 Calea Grivitei Street, 010702, Bucharest, Romania}}
	\email{aiotiman@math.au.dk}

\thanks{N.I. was partially supported by the PNRR-III-C9-2023-I8 grant CF 149/31.07.2023 "Conformal Aspects of Geometry and Dynamics" and by a CNRS PEPS JCJC project. A.O. is supported by a DFF Sapere Aude grant ``Conformal geometry: metrics and cohomology" and by a grant of the Ministry of Research and Innovation, CNCS-UEFISCDI, project no. P1-1.1-PD-2021-0474}
\dedicatory{Dedicated to Professor Paul Gauduchon for his 80th birthday.}	
	
	\begin{abstract}
		We review old and new properties of Hopf manifolds from the point of view
of their analytic and metric structure. 
	\end{abstract}
	
	\maketitle
	
	\hypersetup{linkcolor=blue}

	\section{Introduction}

    Hopf manifolds constitute the main examples of compact complex manifolds not admitting any compatible Kähler metric. These manifolds date back to Hopf \cite{Hopf}, who first studied complex manifolds having the diffeomorphism type of $\mathbb{S}^1\times\mathbb{S}^{2n-1}$. Later on, motivated by the classification of compact complex surfaces, Kodaira \cite{k} reviewed his definition and proposed a more general one, which is the one we adopt here, namely Hopf manifolds are compact complex manifolds of dimension $n \geq 2$ whose universal cover is $\mathbb{C}^n \setminus \{0\}$. Equivalently, they are finite quotients of  $W_\gamma:=\mathbb{C}^n \setminus \{0\}/\langle\gamma\rangle$ for $\gamma\in\Aut(\C^n,0)$ a contraction.

    The geometry of Hopf manifolds is very rich and they have been intensely studied, particularly as model spaces bearing locally conformally Kähler metrics. The aim of this short survey is to discuss some of the more fundamental 
    properties of Hopf manifolds, concerning both the analytic invariants and the metric structure.

The paper is organized as follows. In Section \ref{sec: NormalForm}, we review the general theory of Hopf manifolds, their classification according to the Poincaré-Dulac normal form of the contracting
germ $\gamma$  and several properties concerning their analytic deformations.
This part mostly follows \cite{h}. In Section \ref{sec: Autom}, we discuss in more detail the structure of the automorphism group of a Hopf manifold $W_\gamma$. In particular, this part allows us to determine the Lee group of a Vaisman metric when $\gamma$ is diagonal, and to show that when $\gamma$ is not diagonal, no such Lee group, and therefore no Vaisman metric can exist (\ref{prop: HLee}). This result is possibly new, but generalizes Belgun's proof \cite{bel} in dimension $2$.

In Section \ref{sec: Coh} we revisit analytical and cohomological properties, such as the Dolbeault and the Bott-Chern cohomology and the Kodaira dimension of Hopf manifolds of the form $W_\gamma$. Moreover, for the case when $\gamma$ is diagonal, we determine the algebraic dimension of $W_\gamma$ in terms of the relations satisfied by the eigenvalues of $d_0\gamma$ (\ref{thm: AlgDim}). This generalizes \cite[Thm. 31]{k}, which determines the algebraic dimension of all Hopf surfaces.

Finally, Section \ref{sec: Metrics} is concerned with the metric properties of Hopf manifolds, with a focus on locally conformally Kähler (lcK) metrics.
Gauduchon and Ornea \cite{go} gave the first proof that Hopf surfaces admit lcK metrics, and in the diagonal case, even Vaisman ones. Since then, multiple generalizations have been given, see for instance  \cite{bel, ko,  bru, ov16}. In the last section, we briefly  recall the history of the problem and choose to present Brunella's proof \cite{bru} adapted to the Hopf setting, which is the most general one in some sense, as it does not distinguish among the special froms of the corresponding contraction. We end  with some non-existence results concerning other distinguished Hermitian metrics, following \cite{c, pop14}.

     \section{Normal forms of contractions and the classification of Hopf manifolds}  \label{sec: NormalForm}

   \begin{definition}
A \textit{Hopf manifold} of dimension $n\geq 2$ is a compact complex manifold with universal cover biholomorphic to $W:=\C^n\setminus\{0\}$. It is called \textit{primary} if it has infinite cyclic fundamental group, and \textit{secondary} otherwise.   
\end{definition}

Hopf's initial definition \cite{Hopf}, which is also sometimes the one used in the literature, is the one corresponding to primary Hopf manifolds. In fact, a general Hopf manifold is very close to a primary one, by the following result:

\begin{theorem}(see \cite{k} in dimension $2$, \cite[Thm 2.1, Cor 2.2]{ha} in general dimension)
Let $X$ be a Hopf manifold. Then $\pi_1(X)$ is a finite extension of an infinite cyclic subgroup generated by a contraction. In particular, $X$ admits a primary Hopf surface as a finite unramified normal cover. 
\end{theorem}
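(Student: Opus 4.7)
The plan is to realize $\Gamma := \pi_1(X)$ as a subgroup of $\Aut(\mathbb{C}^n,0)$ and then use compactness of $X$ to produce a contraction that generates a finite-index normal subgroup. First, since $n\geq 2$, the Hartogs extension theorem implies that every biholomorphism of $W = \mathbb{C}^n\setminus\{0\}$ extends to a biholomorphism of $\mathbb{C}^n$, which necessarily fixes the origin. Hence $\Gamma\hookrightarrow \Aut(\mathbb{C}^n,0)$, and differentiation at $0$ yields a homomorphism $d_0\colon \Gamma\to GL_n(\mathbb{C})$.

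Compactness of $X$ forces every $\Gamma$-orbit to accumulate at $0$: a compact fundamental domain $K\subset W$ satisfies $\bigcup_{\gamma\in\Gamma}\gamma(K)=W$, so points of $W$ arbitrarily close to $0$ lie in $\Gamma$-translates of $K$. Extract a sequence $(\gamma_k)\subset\Gamma$ with $\gamma_k(p)\to 0$ for some fixed $p\in W$. I would then show that, after passing to a subsequence, the maps $\gamma_k$ converge uniformly on compact subsets of a small polydisc centered at $0$ to the constant map $0$: since each $\gamma_k$ fixes $0$, Montel's theorem produces a holomorphic limit $g$, and any non-constant $g$ would, by the Hurwitz-type argument, violate proper discontinuity of the $\Gamma$-action on $W$ (new orbit accumulations would appear). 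Cauchy's estimates then force $d_0\gamma_k\to 0$, so for large $k$ all eigenvalues of $d_0\gamma_k$ have modulus strictly less than one, i.e.\ $\gamma_k$ is a contraction. Fix one such element $\gamma_0$.

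To finish, I would study the character $\chi\colon \Gamma\to\mathbb{R}$ defined by $\chi(f) = \log|\det d_0 f|$, which satisfies $\chi(\gamma_0)<0$. A similar dynamical argument shows that any accumulation point of $\chi(\Gamma)$ in $\mathbb{R}$, and any infinite family in $\ker\chi$, would contradict proper discontinuity of the action on $W$: elements of $\Aut(\mathbb{C}^n,0)$ whose differential at $0$ has determinant of modulus bounded away from $0$ and $\infty$ cannot be indefinitely numerous while mapping a fixed compact annulus around the origin into a fixed compact piece of $W$. Thus $\chi(\Gamma)=c\mathbb{Z}$ for some $c>0$ and $\ker\chi$ is finite. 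Replacing $\gamma_0$ by a suitable power so as to ensure normality, one obtains a normal infinite cyclic subgroup $\Gamma_0\trianglelefteq\Gamma$ of finite index, generated by a contraction. The quotient $W/\Gamma_0$ is then a primary Hopf manifold finitely, normally and unramifiedly covering $X$.

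The main obstacle lies in the second paragraph: accumulation of a single orbit at $0$ does not directly control the differentials $d_0\gamma_k$, and excluding an injective non-constant Montel limit $g$ requires using simultaneously proper discontinuity and the punctured structure of $W$ in a delicate way. Once this dynamical fact is in place, the arguments concerning the character $\chi$ proceed along essentially standard lines, using the same compactness-versus-properness dichotomy.
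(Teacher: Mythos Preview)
The paper does not give its own proof of this theorem; it is stated with references to Kodaira and Hasegawa. So there is no ``paper's proof'' to compare against, and your proposal has to stand on its own.

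Your overall architecture is reasonable, and the use of Hartogs to embed $\Gamma$ in $\Aut(\C^n,0)$ is standard and correct. The serious gap---which you yourself flag---is in the second paragraph. Montel's theorem requires the family $\{\gamma_k\}$ to be locally bounded on a neighbourhood of $0$, and nothing you have written establishes this: knowing $\gamma_k(0)=0$ and $\gamma_k(p)\to 0$ for a single point $p$ is perfectly compatible with $d_0\gamma_k$ having eigenvalues of arbitrarily large modulus (think of the linear maps $(z_1,z_2)\mapsto(k^{-1}z_1,kz_2)$). Proper discontinuity and cocompactness do ultimately rule out such behaviour, but extracting a genuine contraction from them takes more than the sketch you give. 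The usual route is to produce directly some $\gamma\in\Gamma$ mapping a closed ball about $0$ strictly into its interior; this forces all eigenvalues of $d_0\gamma$ into $\mathbb{D}^*$ and hence, by the Schwarz-type characterisation the paper recalls just after the definition of contraction, makes $\gamma$ a contraction.

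The third paragraph also has a gap. The determinant character $\chi$ controls only the \emph{product} of the eigenvalue moduli, so a bound on $|\det d_0\gamma|$ does not by itself prevent $\gamma$ from moving a fixed annulus arbitrarily far; the claim that $\ker\chi$ must be finite is therefore not justified as written. Fortunately this whole detour is unnecessary. Once a contraction $\gamma_0\in\Gamma$ is in hand, $W/\langle\gamma_0\rangle$ is already a compact primary Hopf manifold, and the natural map $W/\langle\gamma_0\rangle\to W/\Gamma=X$ is a covering of compact manifolds, hence of finite degree; thus $[\Gamma:\langle\gamma_0\rangle]<\infty$. Passing to the normal core of $\langle\gamma_0\rangle$ in $\Gamma$ then yields a normal infinite cyclic subgroup generated by some power of $\gamma_0$, which is again a contraction.
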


In \cite{Ka}, Kato moreover classifies the posibilities for the finite groups giving rise to secondary Hopf manifolds, as well as the diffeomorphism types of these. Hasegawa \cite{ha} partially generalises Kato's results to any dimension. 

In any case, an equivalent definition of Hopf manifolds is:

\begin{definition}
A primary Hopf manifold of dimension $n\geq 2$ is a complex manifold  $W_\gamma$ obtained as the quotient of $W:=\C^n\setminus\{0\}$ by the $\Z$-action generated by a holomorphic contraction $\gamma\in\Aut(\C^n\setminus\{0\}).$ A secondary Hopf manifold is a finite quotient of a primary Hopf manifold. 
\end{definition}

Recall that  a \textit{contraction} $\gamma$ of $W$ is a biholomorphism of $W$ such that $\gamma^m$ converges uniformly to $0$ on compact neighbourhoods of $0$ when $m$ goes to infinity. Note that by Hartogs' lemma, any $\gamma\in\Aut(\C^n\setminus\{0\})$ extends naturally to an automorphism of $\C^n$ fixing $0$, and via Schwarz lemma it can be shown that $\gamma$ is a contraction precisely when the eigenvalues of $d_0\gamma$ are  all in the punctured unit disk $\mathbb{D}^*$ of $\C$. 

As explained in \cite[Appendix]{h} following \cite{a}, any contraction can be put into normal form after a holomorphic change of coordinates of $W.$ In order to explain this precisely, let us first introduce some terminology.

\begin{definition} 
Let $\lambda=(\lambda_1,\ldots,\lambda_n)\in(\C^*)^n$.
\textbf{A resonance relation} for $\lambda$ is a relation of the form 
\[ \lambda_s=\lambda^m:=\lambda_1^{m_1}\cdots\lambda_n^{m_n},\quad s\in\{1,\ldots,n\}, m=(m_1,\ldots,m_n)\in\N^n\setminus\{e_s\}.\]
We will also denote this by $(s;m)$.
Here $e_1,\ldots, e_n$ denotes the standard basis of $\C^n.$ If $\lambda$ has no resonance relation, we say that $\lambda$ is \textbf{non-resonant}.
\end{definition}

\begin{remark}\label{rem: finiteRel}
Note that if $\lambda\in(\mathbb{D}^*)^n$, then there is only a finite number of resonance relations for $\lambda.$ Indeed, without loss of generality we can assume that  $|\lambda_1|\leq|\lambda_2|\leq\ldots\leq|\lambda_n|<1$ and then note that for each $s\in\{1,\ldots n\}$, for a relation $\lambda_s=\lambda^m$ with  $m\in\N^n$, the length $|m|:=\sum_{k=1}^nm_k\in\N$ is bounded:
\[ \frac{\ln|\lambda_s|}{\ln|\lambda_1|}\leq|m|\leq \frac{\ln |\lambda_s|}{\ln|\lambda_n|}. \]
\end{remark}

\begin{definition}
Let $\lambda\in(\C^*)^n$. An analytic map $\gamma=(\gamma_1,\ldots,\gamma_n)\colon\C^n\rightarrow \C^n$ is called \textbf{$\lambda$-resonant} if each component $\gamma_i$ expresses as
\begin{equation}\label{eq: la-resonant}
 \gamma_i(z)=\alpha_iz_i+\sum_{m}a_{m,i}z^m\end{equation}
with $\alpha_i,a_{m,i}\in\C$ and the sum is taken over each $m\in\N^n$ corresponding to a resonance relation $(i;m)$ for $\lambda$.  We denote by $R_\lambda$ the subalgebra of $\lambda$-resonant analytic maps, and by $G_\lambda\subset R_\lambda$ the group of invertible elements.  
\end{definition}

\begin{remark}\label{rem: finRel}
According to \ref{rem: finiteRel}, if $\lambda\in(\mathbb{D}^*)^n$ then there are only finitely many resonance relations, therefore $R_\lambda$ consists only in polynomial maps and $G_\lambda$, being open in $R_\lambda$, is a finite dimensional connected complex Lie group. Its Lie algebra is given by
\[ \Lg_{\lambda}=\langle z_1\frac{\del}{\del z_1},\ldots z_n\frac{\del}{\del z_n}\rangle\oplus\langle z^m\frac{\del}{\del z_s}\mid (s;m)\rangle\]
where the second family is taken over all resonance relations $(s;m)$ for $\lambda$, and the sum is a vector space sum.
\end{remark}

\begin{remark}\label{rem: diagonal}
If we denote by 
\[ d_\lambda\colon\C^n\rightarrow\C^n,\quad z\mapsto (\lambda_1z_1,\ldots,\lambda_nz_n)\]
then it is easy to check that 
\[ R_\lambda=\{\gamma\colon \C^n\rightarrow\C^n\mid \gamma\circ d_\lambda=d_\lambda\circ\gamma\}. \]
Thus the above remark implies that $\Lg_\lambda=\mathfrak{aut}(W_{d_\lambda})$ and  $G_\lambda/\langle d_\lambda\rangle=\Aut_0(W_{d_\lambda})$.

\end{remark}

We have the following classification result:

\begin{theorem}\label{normalform}(Poincaré-Dulac normal form, cf. \cite[p. 187]{a} and \cite[Appendix]{h})\newline Let $\gamma\in\Aut(\C^n,0)$ be a contraction and let $\lambda=(\lambda_1,\ldots,\lambda_n)\in(\mathbb{D}^*)^n$ be the eigenvalues of $d_0\gamma$. Then there exists a biholomorphism $h\in\Aut(\C^n,0)$ and a contraction $\tilde\gamma\in G_\lambda$ so that $\tilde\gamma=h\gamma h^{-1}$. Moreover, one can pick $\tilde\gamma$ in upper triangular form, i.e. so that each component of $\tilde\gamma$ is of the form
\[\tilde\gamma_j(z)=\lambda_jz_j+g_j(z_{j+1},\ldots, z_{n}),\quad \alpha_j\in\C^*, j\in\{1,\ldots, n\}. \]
Such a $\tilde\gamma$ is called a normal form for $\gamma$.
\end{theorem}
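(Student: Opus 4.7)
The plan is to follow the classical Poincar\'e-Dulac scheme, which breaks naturally into a formal step, a convergence step, and a verification of the upper triangular structure. First, by a linear change of coordinates I may assume that $\Lambda := d_0\gamma$ is already in upper triangular Jordan form, with the eigenvalues listed along the diagonal in order of increasing modulus $|\lambda_1| \leq \ldots \leq |\lambda_n| < 1$ and Jordan blocks of equal eigenvalues grouped consecutively. Writing $\gamma(z) = \Lambda z + \sum_{k \geq 2} \gamma^{(k)}(z)$ as a sum of homogeneous components, I proceed by induction on $k$: at each step I look for a polynomial map $h_k = \mathrm{id} + P_k$ with $P_k$ homogeneous of degree $k$, such that the degree $k$ component of $h_k \circ \gamma \circ h_k^{-1}$ becomes purely resonant. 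Expanding shows this amounts to solving the cohomological equation
\[ \Lambda P_k(z) - P_k(\Lambda z) = \gamma^{(k)}_{\mathrm{nr}}(z), \]
where $\gamma^{(k)}_{\mathrm{nr}}$ is the non-resonant part. On the space of degree $k$ homogeneous polynomial maps, the operator $P \mapsto \Lambda P - P \circ \Lambda$ has spectrum $\{\lambda_s - \lambda^m : |m|=k,\, 1\leq s\leq n\}$ (plus a nilpotent contribution from the Jordan structure handled in the standard way), so on the non-resonant subspace it is invertible and $P_k$ exists. By \ref{rem: finiteRel}, there is $N \in \N$ beyond which no resonances occur, hence all terms of degree $> N$ are eventually eliminated.

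The main obstacle is showing that the infinite composition $h := \cdots \circ h_3 \circ h_2$ converges analytically rather than just formally. The hypothesis $\lambda \in (\mathbb{D}^*)^n$ places us in the Poincar\'e domain, so no small divisors arise: since $|\lambda^m| \leq |\lambda_n|^{|m|} \to 0$, there is $N_0$ such that $|\lambda^m - \lambda_s| \geq \tfrac{1}{2}\min_s|\lambda_s|$ for all $|m| \geq N_0$ and all $s$, while the finitely many remaining divisors with $|m|<N_0$ are non-zero by the non-resonance condition. Combining these uniform lower bounds with Cauchy estimates on a small polydisk, a standard majorant series argument (going back to Poincar\'e) yields convergence of the composition to an analytic germ $h \in \Aut(\C^n,0)$. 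By \ref{rem: finRel}, the resulting $\tilde\gamma = h\gamma h^{-1}$ is polynomial, hence globally defined on $\C^n$ and contained in $G_\lambda$; the identity then extends from a neighborhood of $0$ to all of $\C^n$ by analytic continuation.

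Finally, the upper triangular form follows from the ordering of the eigenvalues. Suppose $(j;m)$ is a resonance with $|m| \geq 2$ and some $m_i > 0$ for $i \leq j$. Since $|m| \geq 2$, the product $|\lambda^m| = \prod_k |\lambda_k|^{m_k}$ contains at least one factor $|\lambda_{k}| < 1$ besides $|\lambda_i|$, so
\[ |\lambda^m| < |\lambda_i| \leq |\lambda_j|, \]
contradicting $\lambda_j = \lambda^m$. Therefore every resonant monomial appearing in $\tilde\gamma_j$ of degree $\geq 2$ involves only $z_{j+1}, \ldots, z_n$. For the linear part, the Jordan normal form (with blocks ordered by modulus) contributes to the $j$-th component only $\lambda_j z_j$ and possibly an off-diagonal term $z_{j+1}$ when $\lambda_j = \lambda_{j+1}$; by choosing the order among equal-modulus eigenvalues appropriately, no $z_i$ with $i < j$ appears. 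Collecting everything gives $\tilde\gamma_j(z) = \lambda_j z_j + g_j(z_{j+1}, \ldots, z_n)$, as claimed.
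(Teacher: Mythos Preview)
The paper does not supply its own proof of \ref{normalform}; it simply records the result with references to Arnol'd and to Haefliger's appendix. So there is nothing to compare your argument against in the paper itself. Your write-up is the classical Poincar\'e--Dulac scheme and is essentially correct: the cohomological equation, the Poincar\'e-domain estimate on the divisors $\lambda_s-\lambda^m$, and the ordering argument for the upper triangular shape are all standard and accurately reproduced.

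One point is imprecise. What the majorant series yields is a convergent biholomorphic \emph{germ} $h$ at $0$ satisfying $h\gamma=\tilde\gamma h$ on some ball $B_\epsilon$. Your phrase ``the identity then extends \ldots\ by analytic continuation'' does not explain why $h$ itself becomes an element of $\Aut(\C^n,0)$ in the global sense used in the paper (so that $W_\gamma\cong W_{\tilde\gamma}$). The correct step uses the contraction hypothesis, not analytic continuation: for any $z\in\C^n$ choose $N$ with $\gamma^N(z)\in B_\epsilon$ and set
\[
H(z):=\tilde\gamma^{-N}\bigl(h(\gamma^{N}(z))\bigr),
\]
which is independent of $N$ because $h\gamma=\tilde\gamma h$ on $B_\epsilon$ and because $\tilde\gamma\in G_\lambda$ has a polynomial inverse in $G_\lambda$. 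This $H$ is a global biholomorphism of $\C^n$ conjugating $\gamma$ to $\tilde\gamma$. With that amendment the proof is complete.
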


\begin{definition}
A Hopf manifold of the form $W_{d_\lambda}$ for some $\lambda\in(\mathbb{D}^*)^n$ is called \textit{a diagonal Hopf manifold}. A primary Hopf manifold $W_\gamma$ for which $\gamma$ does not admit a diagonal normal form  is also called a \textit{a non-diagonal Hopf manifold.}
\end{definition}

 \subsection{ Moduli spaces and deformations}

    Note at this point that while $\lambda\in(\mathbb{D}^*)^n$ is an invariant of the Hopf manifold $W_{\gamma}$, the normal form $\tilde\gamma$ of $\gamma$ is not unique. Indeed, $G_\lambda$ acts on itself by conjugation and every element in the orbit $G_\lambda.\tilde\gamma$ gives a different representative of a normal form for the same Hopf manifold. In any case, as a consequence we obtain that $G_\lambda$ parametrizes all small deformations of a Hopf manifold $W_\gamma$, more specifically:

  \begin{theorem}\label{Thm: H}(see \cite{h} for the full statement, but also \cite{w}, \cite{b} for particular cases)
  
  Let $W_\gamma$ be a Hopf manifold with $\gamma\in G_\lambda$ in normal form and let $S\subset G_\lambda$ be a complex analytic slice passing through $\gamma$ and complementary to the orbit $G_\lambda.\gamma$. Then the family $(W_s)_{s\in S}$ forms a versal deformation of $W_\gamma.$ Moreover we have:
\begin{gather*}
    H^0(W_\gamma,TW_\gamma)\cong H^1(W_\gamma,TW_\gamma) \cong \ker(\id-\gamma_*\in\End(\Lg_\lambda)),\\
 H^k(W_\gamma,TW_\gamma)=0, \quad k>1.
\end{gather*}

\end{theorem}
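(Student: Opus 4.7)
The plan is to reduce the cohomology computation to the $\gamma_*$-action on $H^*(W, TW)$ via the infinite-cyclic Galois cover $\pi\colon W \to W_\gamma$, compute the resulting (co)invariants with the help of the Poincaré--Dulac normal form, and then deduce versality from Kuranishi's theorem. Since $\Z = \langle \gamma \rangle$ has cohomological dimension $1$, the Cartan--Leray spectral sequence for $\pi$ degenerates into short exact sequences
\[
0 \to \coker\bigl(\id - \gamma_*|_{H^{k-1}(W, TW)}\bigr) \to H^k(W_\gamma, TW_\gamma) \to \ker\bigl(\id - \gamma_*|_{H^k(W, TW)}\bigr) \to 0.
\]
On $W = \C^n \setminus \{0\}$, Hartogs extension and the Čech computation for the Stein cover $\{z_i \neq 0\}_{i=1,\ldots,n}$ yield $H^0(W, TW) = \Gamma(\C^n, T\C^n)$, $H^q(W, TW) = 0$ for $1 \leq q \leq n-2$, and $H^{n-1}(W, TW) \cong \bigoplus_{s=1}^n \bigl(\bigoplus_{m \in \Z_{<0}^n} \C \cdot z^m\bigr) \del/\del z_s$.

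Next, I analyze $\gamma_*$ using that $\gamma$ commutes with $d_\lambda$ (since $\gamma \in G_\lambda$): this implies that $\gamma_*$ and $(d_\lambda)_*$ commute, so the $(d_\lambda)_*$-invariant subspace $\Lg_\lambda \subset H^0(W, TW)$ is $\gamma_*$-stable. Filtering $H^0$ (resp.\ $H^{n-1}$) by the order of vanishing (resp.\ pole) at $0$, the action of $\gamma_*$ on the associated graded coincides with that of $(d_\lambda)_*$, namely multiplication by $\lambda_s/\lambda^m$ on $z^m \del/\del z_s$. On $H^0$ this eigenvalue equals $1$ precisely at resonances $(s;m)$, which span the finite-dimensional $\Lg_\lambda$ (\ref{rem: finRel}), while all remaining eigenvalues are bounded away from $1$; an order-by-order inversion on a $\gamma_*$-stable topological complement $M$ of $\Lg_\lambda$ then shows that $\id - \gamma_*|_M$ is a topological isomorphism, whence $\ker(\id-\gamma_*|_{H^0}) \cong \ker(\id-\gamma_*|_{\Lg_\lambda})$ and $\coker(\id-\gamma_*|_{H^0}) \cong \coker(\id-\gamma_*|_{\Lg_\lambda})$, the latter being isomorphic to the former by finite-dimensional rank-nullity. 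On $H^{n-1}$, the constraint $m_i \leq -1$ together with $|\lambda_i| < 1$ forces $|\lambda_s/\lambda^m| \leq |\lambda_s| \cdot |\lambda_1 \cdots \lambda_n| < 1$, so all eigenvalues have modulus uniformly less than $1$, $\id - \gamma_*$ is bijective, and both $\ker$ and $\coker$ vanish. Substituting these computations into the short exact sequences yields the stated cohomology vanishing and isomorphisms.

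For versality, the vanishing $H^2(W_\gamma, TW_\gamma) = 0$ ensures that the deformation functor of $W_\gamma$ is unobstructed, so by Kuranishi theory it is enough to check that the Kodaira--Spencer map $\rho \colon T_\gamma S \to H^1(W_\gamma, TW_\gamma)$ is an isomorphism. As the family $(W_s)_{s \in S}$ arises by varying the deck transformation within $G_\lambda$, a direct computation identifies $\rho$ with the composition $T_\gamma S \hookrightarrow T_\gamma G_\lambda = \Lg_\lambda \twoheadrightarrow \coker(\id - \gamma_*|_{\Lg_\lambda}) \cong H^1(W_\gamma, TW_\gamma)$. Because $S$ is a slice transverse to the conjugation orbit $G_\lambda.\gamma$, whose tangent at $\gamma$ is exactly $\im(\id - \gamma_*|_{\Lg_\lambda})$, this composition is an isomorphism.

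The main obstacle lies in making the order-by-order inversion of $\id - \gamma_*$ on the infinite-dimensional complement $M$ rigorous: the argument is formal on each graded piece, but one must verify that the resulting series converges in the Fréchet topology of $H^0(W, TW)$. This convergence ultimately relies on the contraction property $|\lambda_i| < 1$, which keeps the moduli $|\lambda_s/\lambda^m|$ of non-unit eigenvalues uniformly away from $1$ as $|m|$ grows.
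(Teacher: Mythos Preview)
The paper does not give its own proof of this theorem; it is quoted from Haefliger \cite{h} (with \cite{w}, \cite{b} for special cases), so there is nothing in the text to compare against directly. That said, your sketch is precisely the approach of \cite{h}, and it also mirrors what the paper does carry out in detail for $\Omega^k$ in place of $TW$ in the proof of \ref{dolbeault}: reduce to the $\Z$-equivariant cohomology of $W$ via the degenerate Cartan--Leray/Grothendieck spectral sequence, use $H^q(W,\mathcal{O}_W)=0$ for $q\neq 0,n-1$ from \cite[Lemma~3.2]{h}, and analyse $\id-\gamma_*$ through the eigenvalues $\lambda_s/\lambda^m$. The identification of $T_\gamma(G_\lambda.\gamma)$ with $\im(\id-\gamma_*|_{\Lg_\lambda})$ and the appeal to unobstructed Kuranishi theory for the versality statement are also in line with \cite{h}.

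One small correction: your assertion that on the associated graded of the order-of-vanishing filtration ``$\gamma_*$ coincides with $(d_\lambda)_*$'' is only literally true when $d_0\gamma$ is diagonal. In general $d_0\gamma$ may carry Jordan blocks coming from the degree-one resonances $\lambda_i=\lambda_j$, and the induced action on the associated graded is that of $(d_0\gamma)_*$, not $(d_\lambda)_*$. This does not damage the argument---the generalised eigenvalues on each graded piece are still $\lambda_s/\lambda^m$, so $\id-\gamma_*$ remains invertible off $\Lg_\lambda$---but the order-by-order inversion should be phrased in terms of the triangular linear part rather than a purely diagonal one. The convergence issue you flag at the end is exactly the point addressed in the appendix of \cite{h}, using the uniform bound $|1-\lambda_s/\lambda^m|\geq c>0$ for non-resonant $(s;m)$ that the contraction hypothesis $\lambda\in(\mathbb{D}^*)^n$ provides.
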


For primary Hopf manifolds we have the following result, typically very useful in order to extend properties of diagonal Hopf manifolds to non-diagonal ones via upper-semicontinuity.

\begin{theorem}\label{deformation}(see for instance \cite[p. 242]{h})

Let $W_\gamma$ be a  Hopf manifold with $\gamma\in G_\lambda$ in normal form.
There exits a holomorphic family of Hopf manifolds $p\colon\mathcal{X}\rightarrow \C $ 
such that $p^{-1}(0)= W_{d_\lambda}$ and for any $t\in \C^*$ we have $p^{-1}(t)\cong W_\gamma$.
\end{theorem}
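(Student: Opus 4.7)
The plan is to produce a holomorphic one-parameter family $(\tilde\gamma_t)_{t\in\C}$ of contractions in $G_\lambda$ such that $\tilde\gamma_0=d_\lambda$ and $\tilde\gamma_t$ is conjugate to $\gamma$ for every $t\in\C^*$, and then obtain $\mathcal X$ as the quotient of $W\times\C$ by the $\Z$-action that iterates $\tilde\gamma_t$ fiberwise.

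First, using that $\gamma$ is in upper triangular normal form (\ref{normalform}), each component reads
\[
 \gamma_i(z)=\lambda_i z_i+\sum_{(i;m)}a_{m,i}z^m,
\]
where the multi-indices $m$ appearing in the sum correspond to resonance relations $(i;m)$ and are supported in $\{i+1,\ldots,n\}$; in particular $|m|\geq 1$. I would introduce the family of automorphisms $\phi_t(z):=(tz_1,t^2z_2,\ldots,t^nz_n)$ of $\C^n$ for $t\in\C^*$, and set $\tilde\gamma_t:=\phi_t^{-1}\circ\gamma\circ\phi_t$. A direct computation gives
\[
(\tilde\gamma_t)_i(z)=\lambda_i z_i+\sum_{(i;m)}a_{m,i}\,t^{\sum_j jm_j-i}\,z^m.
\]
The key observation is that the exponent $\sum_j j m_j-i$ is a strictly positive integer, since $\sum_j jm_j\geq(i+1)|m|\geq i+1$. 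Consequently the formula extends holomorphically across $t=0$, with $\tilde\gamma_0=d_\lambda$.

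Next, I would verify that $\tilde\gamma_t$ is a contraction of $\C^n\setminus\{0\}$ lying in $G_\lambda$, for every $t\in\C$. The map $\tilde\gamma_t$ is polynomial, upper triangular with linear part $d_\lambda$, hence invertible in $\Aut(\C^n,0)$ (inverting the $i$-th component in $z_i$ from $i=n$ down to $i=1$). Its differential at $0$ has eigenvalues in $\mathbb{D}^*$, so by the Hartogs-Schwarz characterization recalled in \ref{sec: NormalForm}, $\tilde\gamma_t$ is indeed a contraction. It lies in $G_\lambda$ because only $\lambda$-resonant monomials appear.

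Finally, I would assemble the family $\mathcal X:=(W\times\C)/\Z$, where $k\in\Z$ acts by $(z,t)\mapsto(\tilde\gamma_t^k(z),t)$, with $p\colon\mathcal X\to\C$ induced by the second projection. The action is holomorphic and free, each $\tilde\gamma_t$ being a fixed-point-free contraction on $W$. The step that needs the most care is verifying that the action is properly discontinuous jointly in $(z,t)$, so that $\mathcal X$ inherits a Hausdorff complex manifold structure and $p$ is a submersion. I expect this to follow from a uniform contraction estimate: since $d_0\tilde\gamma_t=d_\lambda$ is independent of $t$ and the nonlinear coefficients depend polynomially—hence boundedly on compacts—on $t$, the iterates $\tilde\gamma_t^k(z)$ converge to $0$ as $k\to+\infty$ and to $\infty$ as $k\to-\infty$, uniformly in $t$ and $z$ on compact subsets of $\C$ and $W$ respectively. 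By construction $p^{-1}(0)=W_{d_\lambda}$, while for $t\in\C^*$ the biholomorphism $\phi_t$ of $W$ descends to an isomorphism $p^{-1}(t)=W_{\tilde\gamma_t}\cong W_\gamma$, completing the proof.
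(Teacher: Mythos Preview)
Your proof is correct and follows essentially the same route as the paper: the paper conjugates $\gamma$ by the diagonal automorphism $d_\mu$ with $\mu=(t^{-1},t^{-2},\ldots,t^{-n})$, which is exactly your $\phi_t^{-1}$, obtaining the same family $\tilde\gamma_t$ with the same exponents $\sum_j jm_j-i$ and the same positivity observation. You add some extra care about proper discontinuity and the contraction property that the paper leaves implicit, but the core idea is identical.
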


\begin{proof}
The proof of this result comes down to noticing that when the components of $\gamma$ are in the form \eqref{eq: la-resonant}, by conjugating with diagonal automorphisms, which live in $G_\lambda$, one can simultaneously make all the coefficients $a_{m,i}$ as small as one wishes.
More specifically, let
\[\gamma=d_\lambda+(g_1,\ldots, g_{n}), \quad g_i(z)=\sum_ma_{m,i}z^m, \]
be in upper triangular normal form, so that for each coefficient  $a_{m, i}$ that does not vanish we have $m_j=0$ as soon as $1\leq j\leq i$, and in particular $n_{i,m}:=\sum_{j=1}^n jm_j-i>0$.

Define the family $\mathcal X:=W\times \C/\Z$ where $1\in\Z$ acts on $(z,t)\in W\times\C$ by
\[1.(z,t)=(d_{\lambda}(z)+g_t(z),t)\]
with the $i$-th component of $g_t$ being
\[\sum_{m}t^{n_{i,m}}a_{m,i}z^m.\]
The map $p$ is the naturally induced projection, so that $p^{-1}(0)=W_{d_\lambda}.$

For any $\mu=(\mu_1, \ldots, \mu_n)\in (\C^*)^n$ we have $d_\mu\in G_{\lambda}$ and the $i$-th component of $d_\mu\gamma d^{-1}_\mu$ is given by
\[ \lambda_iz_i+\sum_ma_{m,i}\frac{\mu_i}{\mu^m}z^m. \]
In particular, for $t\in\C^*$ and $\mu:=(t^{-1},t^{-2},\ldots, t^{-n})$ we obtain that $d_{\mu}\gamma d_{\mu}^{-1}=d_{\lambda}+g_t$ and so $W_\gamma\cong p^{-1}(t)$, which concludes the proof. \end{proof}

\section{Automorphisms}\label{sec: Autom}
    
Given $\lambda\in(\mathbb{D}^*)^n$ and $\gamma\in G_\lambda$ with eigenvalues $\lambda=(\lambda_1,\ldots,\lambda_n)$, let us denote by \[\Lg_\gamma:=\Lg_\lambda^{\langle\gamma_*\rangle}=H^0(W_\gamma, TW_\gamma)\]
where the last equality holds by \ref{Thm: H}. In particular, $\Aut_0(W_\gamma)=G_\gamma/\langle\gamma\rangle$, where $G_\gamma$ is the exponential of $\Lg_\gamma$ in the group $G_\lambda$.

Let us now make more explicit the Lie group structure of $G_\lambda$. Recall that by \ref{rem: finRel} and \ref{rem: diagonal}, the group $G_\lambda$ has Lie algebra 
\[ \Lg_{\lambda}=\langle z_1\frac{\del}{\del z_1},\ldots z_n\frac{\del}{\del z_n}\rangle\oplus\langle z^m\frac{\del}{\del z_s}\mid (s;m)\rangle=\La\oplus\Lr.\]
 Note that the first summand $\La$ corresponds to the Lie algebra of the standard torus $(\C^*)^n\subset\Aut(W)$. 
A vector field of the form $\xi_{s,m}=z^m\frac{\del}{\del z_s}\in\Lr$ generates the effective action of $\C\subset\Aut(W)$ given by
\[ \Phi^t_{\xi_{s,m}}(z)=z+tz^me_s,\quad t\in\C, z\in W.\]
Moreover, $\Lr$ has a natural finite $\N$-grading given by
\[\Lr_l:=\langle \xi_{s,m}|(s;m), l=|m|\rangle\quad l\in\N, l\geq 1.\]

Denoting by $Z_j=z_j\frac{\del}{\del z_j}=\xi_{j,e_j}$, $j\in\{1,\ldots,n\}$, we have the following relations:
\begin{equation}\label{eq: LieRel} [Z_j,\xi_{s,m}]=(m_j-\delta_{j,s})\xi_{s,m};\quad [\xi_{s,m},\xi_{t,n}]=n_s\xi_{t,m+n-e_s}-m_t\xi_{s,m+n-e_t}. \end{equation}
In particular, $[\Lr_a,\Lr_b]\subset \Lr_{a+b-1}$ every time $(a,b)\neq (1,1)$, so that $\Ln_{\lambda}:=\oplus_{l\geq 2}\Lr_l$ is a nilpotent ideal of $\Lg_\lambda$ and \[\Lg_\lambda^{red}:=\La\oplus\Lr_1\subset \mathfrak{gl}(n,\C)\]
is a sub-algebra of $\Lg_\lambda$.
Moreover, the equivalence relation $\sim$ on $\{1,\ldots, n\}$ defined by $i\sim j$ $\Leftrightarrow$ $\lambda_i=\lambda_j$ gives us a partition $\{1,\ldots, n\}=\bigsqcup_{i=1}^kI_i$ so that for each orbit $I_i$, denoting by $l_i$ its cardinal, we readily find, via \eqref{eq: LieRel}, isomorphisms of Lie algebras
\[\Lg_i:=\langle \xi_{j,e_k}| j,k\in I_i\rangle\cong \mathfrak{gl}(l_i,\C),\quad \Lg_\lambda^{red}\cong \bigoplus_{i=1}^k\Lg_i.\]
In particular, $\Lg_\lambda^{red}=\Lg_\lambda/\Ln_{\lambda}$ is reductive, implying that $\Ln_{\lambda}$ is the nilradical of $\Lg_\lambda$ and we have a semi-direct decomposition 
\begin{equation}\label{eq: Gred}
\Lg_\lambda=\Ln_{\lambda}\rtimes \Lg_\lambda^{red}.\end{equation}
Denoting by $N:=\mathrm{exp}(\Ln_{\lambda})\subset\Aut(W)$ and by $G_{\lambda}^{red}:=\exp(\Lg_\lambda^{red})\subset\mathrm{GL}(n,\C)\subset\Aut(W)$, this lifts to 
\begin{equation}\label{eq: GroupRed}
G_\lambda=N\rtimes G_\lambda^{red}, \quad G_\lambda^{red}=\prod_{i=1}^k\mathrm{GL}(l_i,\C) \end{equation}
where the product group is identified with a subgroup of  $\mathrm{GL}(n,\C)$  via the standard block-diagonal embedding.

Note that the group $N$ has no compact subgroup, since any vector field of $\Ln_{\lambda}$
generates an effective $\C$-action. In particular,  the exponential map $\Ln_\lambda\rightarrow N$ is a biholomorphism and $N$ is simply connected.

\begin{remark}
If $\lambda$ has only resonance relations $(s;m)$ of degree $|m|\geq 2$, i.e. if $\Ln_{\lambda}=\Lr$,  then $\Ln_{\lambda}=[\Lg_\lambda,\Lg_\lambda]$ and  $\Lg_\lambda$ has the structure of a complex \textit{splittable} solvable Lie algebra, i.e.
 $\Lg_\lambda=\La\ltimes_\phi \mathfrak{n}$ for  $\phi:\La\rightarrow \mathfrak{aut}(\Ln_{\lambda})$ given by the adjoint representation. In particular, we have an exact sequence of complex Lie groups:
\begin{equation*}
   \begin{tikzcd}
   0\rar &N\rar &G_\lambda\rar &(\C^*)^n\rar&0.
   \end{tikzcd}
\end{equation*}
\end{remark}

 \begin{proposition}\label{prop: HLee}
 Let $W_\gamma$ be a primary Hopf manifold. If $W_\gamma$ is non-diagonal, there exists no compact torus $H\subset\Aut_0(W_\gamma)$ so that $J\Lh\cap\Lh\neq \{0\}.$
If $\gamma=d_\lambda$, then the only complex one-dimensional Lie group $G$ sitting in a compact torus $H\subset\Aut_0(W_\gamma)$ is generated by 
\[Z:=\sum_{j=1}^n\ln |\lambda_j|z_j\frac{\del}{\del z_j}.\]
Its action on $W_\gamma$ is induced by the action of $\C^*$ via
 \[ \mu.[z_1,\ldots, z_n]=[\mu^{\ln |\lambda_1|}z_1,\ldots, \mu^{\ln |\lambda_n|}z_n],\quad \mu\in\C^*, [z]\in W_\gamma.\]

 \end{proposition}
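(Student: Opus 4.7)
The plan is to classify compact tori of $\Aut_0(W_\gamma) = G_\gamma/\langle\gamma\rangle$ using the semi-direct decomposition $G_\lambda = N \rtimes G_\lambda^{red}$ of Section~\ref{sec: Autom} and the Cartan decomposition of $\Lg_\lambda^{red}$. Let $H \subset \Aut_0(W_\gamma)$ be a compact torus with Lie algebra $\Lh$, and let $\tilde H^0 \subset G_\gamma$ denote the identity component of its preimage under the covering $G_\gamma \to G_\gamma/\langle\gamma\rangle$. As a connected abelian Lie group whose quotient by (a subgroup of) the rank-one group $\langle\gamma\rangle$ is compact, one has $\tilde H^0 \simeq \R^p \times T^q$ with $p \leq 1$.

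Since $N$ has no compact subgroup, compact subgroups of $G_\lambda$ embed into a maximal compact subgroup of $G_\lambda^{red}$ --- conjugate to $K = \prod_i U(l_i)$ with Lie algebra $\mathfrak{k}$ --- and the Cartan decomposition $\Lg_\lambda^{red} = \mathfrak{k} \oplus i\mathfrak{k}$ yields in particular $\mathfrak{k}\cap i\mathfrak{k} = \{0\}$ as real subspaces of $\Lg_\lambda$. When $p = 0$, $\Lh \subset \mathfrak{k}$ and hence $\Lh \cap J\Lh \subset \mathfrak{k}\cap i\mathfrak{k} = 0$, so no complex direction can occur. The substantive case is $p = 1$: there exists $Y_0 \in \Lh$ with $\exp(Y_0) = \gamma^{m_0}$ for some $m_0 \neq 0$, and $\Lh = \R Y_0 \oplus \mathfrak{t}$ with $\mathfrak{t} \subset \mathfrak{k}$ an abelian subalgebra commuting with $Y_0$.

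The core computation decomposes $Y_0 = Y_{0,n} + Y_{0,K} + iY_{0,P}$ along $\Lg_\lambda = \Ln_\lambda \oplus \mathfrak{k} \oplus i\mathfrak{k}$, and writes a general candidate as $X = s Y_0 + X_\mathfrak{t}$ with $iX = s' Y_0 + X'_\mathfrak{t}$ for $s, s' \in \R$ and $X_\mathfrak{t}, X'_\mathfrak{t} \in \mathfrak{t}$. Equating the three components of the second identity yields
\[
(is - s')\,Y_{0,n} = 0, \qquad X_\mathfrak{t} = s' Y_{0,P} - s Y_{0,K}, \qquad X'_\mathfrak{t} = -s Y_{0,P} - s' Y_{0,K}.
\]
If $\gamma$ has a non-trivial nilradical component ($Y_{0,n} \neq 0$, i.e.\ higher-degree terms in its normal form), the first equation forces $s = s' = 0$, whence $X \in \mathfrak{k}\cap i\mathfrak{k} = 0$. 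If $Y_{0,n} = 0$ but the Jordan nilpotent $N_0$ of $Y_{0,r}$ is non-zero (so $\gamma$ has a genuine Jordan block in $G_\lambda^{red}$), then $\mathfrak{t}$ must commute with $N_0$, and the identity $[Y_0, N_0] = 0$ rewrites $[X_\mathfrak{t}, N_0] = (s' + is)[Y_{0,P}, N_0]$. Since $Y_0$ commutes with $N_0$ but not with the Hermitian adjoint $N_0^*$ (which destroys upper-triangularity), one has $[Y_{0,P}, N_0] \neq 0$, again forcing $s = s' = 0$ and $X = 0$. These two cases together exhaust the non-diagonal setting.

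Finally, for $\gamma = d_\lambda$ one has $Y_0 = \sum_j \log \lambda_j\, Z_j$ with $Y_{0,K} = \sum_j i\arg\lambda_j\, Z_j$ and $i Y_{0,P} = \sum_j \log|\lambda_j|\, Z_j = Z$; substituting into the formula above collapses $X$ to $(s - i s') Z$, so $\Lh\cap J\Lh \subset \C Z$, with equality achieved for $H$ maximal. Integrating the holomorphic vector field $Z$ on $W$ and descending to $W_{d_\lambda}$ then produces the stated $\C^*$-action. I expect the main technical obstacle to be the Jordan-block step --- verifying $[Y_{0,P}, N_0] \neq 0$ whenever $N_0 \neq 0$ --- which is the precise algebraic incarnation of the distinction between diagonal and non-diagonal contractions.
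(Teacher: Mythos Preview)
Your approach is genuinely different from the paper's. The paper projects $H$ via $q'\colon G_\lambda/\langle\gamma\rangle\to G_\lambda^{red}/\langle q(\gamma)\rangle$ (an isomorphism on $H$ since $N$ has no compact subgroups), classifies maximal tori in the reductive quotient to force $q(\gamma)=d_\lambda$, and then eliminates the $N$-part of $\gamma$ by a \emph{Liouville argument}: the holomorphic map $g\mapsto q(g)^{-1}g$ restricted to the complex subgroup $H_c=\exp(J\Lh\cap\Lh)$ is bounded (its image lies in the compact $\phi'(H)$), hence constant, which pushes $\hat H$ into $G_\lambda^{red}$ and forces $\gamma=d_\lambda$. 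You instead work inside $G_\lambda$ via the real Cartan decomposition $\Lg_\lambda^{red}=\mathfrak{k}\oplus i\mathfrak{k}$ and pure linear algebra on the components of $Y_0$. Your route is more computational and entirely avoids the complex-analytic Liouville step; the paper's is shorter and more conceptual.

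There is, however, a genuine unacknowledged gap in your case split. You assert that ``$Y_{0,n}\neq 0$'' and ``$Y_{0,n}=0,\ N_0\neq 0$'' together exhaust the non-diagonal setting, but this amounts to the implication: if $Y_0\in\Lg_\lambda^{red}$ is semisimple, then $\gamma$ is diagonal. This is \emph{not} immediate, since $\exp(Y_0)=\gamma^{m_0}$ only constrains a power of $\gamma$. One must argue: $\gamma^{m_0}=\exp(Y_0)$ is then semisimple in $G_\lambda^{red}$, hence so is $\gamma_R^{m_0}=q(\gamma)^{m_0}$; since each block of $\gamma_R\in\prod_i\mathrm{GL}(l_i,\C)$ has the single eigenvalue $\lambda_{(i)}$, torsion-freeness of unipotent matrices forces $\gamma_R=d_\lambda$; finally, the resonance relations make $\Ad_{d_\lambda}$ trivial on $\Ln_\lambda$, so $\gamma^{m_0}=\gamma_N^{m_0}d_\lambda^{m_0}$, and torsion-freeness of the simply connected nilpotent group $N$ gives $\gamma_N=e$. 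This torsion-freeness chain is precisely the content the paper packages into its Liouville step, and you should supply it explicitly.

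Your acknowledged obstacle, by contrast, is easy to close: if $[Y_0,N_0^*]=0$ then $N_0^*$ commutes with $Y_0$ and hence with its nilpotent Jordan part $N_0$, so $N_0$ is normal and thus $N_0=0$. Equivalently $[Y_{0,P},N_0]=\tfrac{1}{2i}[Y_0^*,N_0]\neq 0$ whenever $N_0\neq 0$.
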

 \begin{proof}
  Let $\gamma\in G_\lambda$ be in normal form, let us denote by $G:=G_\gamma/\langle \gamma\rangle\subset G_\lambda/\langle\gamma\rangle$, let 
 \[q\colon G_\lambda\rightarrow G_\lambda^{red},\quad q'\colon G_\lambda/\langle\gamma\rangle\rightarrow G_\lambda^{red}/\langle q(\gamma)\rangle\] 
 be the natural projections and we denote also by $G':=G_\lambda^{red}/\langle q(\gamma)\rangle$. Note that since $N\cap\langle\gamma\rangle=\{\id\}$, we have $N=\ker q'.$
 
  Let $H\subset G$ be a torus as in the statement. By eventually replacing it with a smaller torus, we can assume that $H_c:=\exp(J\Lh\cap\Lh)\subset H$ is dense in $H$. Since the group $N$ has no compact subgroup, $H\cap N$ is trivial and therefore $H$ projects isomorphically via $q'$ to a torus $H'$ in $G'$ whose Lie algebra $\Lh'$ also satisfies $J\Lh'\cap\Lh'\neq\{0\}$.
  
  By eq. \eqref{eq: GroupRed}, the exponential map of $G_\lambda^{red}$ is surjective, therefore we can write $\gamma':=q(\gamma)=\exp(\xi_\gamma')$ for some vector field $\xi_\gamma'\in\Lg_\lambda^{red}$. Let us denote by $\Lt\subset\La$ the Lie algebra of the real compact subtorus of $(\C^*)^n\subset G_\lambda$. Since any maximal torus in $G_\lambda^{red}$ is conjugated to $\exp(\Lt)$, we find that any maximal torus in $G'$ up to conjugation has Lie algebra either $\Lt$ or $\Lt\oplus\R\xi'_\gamma.$ Since such a torus must moreover contain the non-purely real Lie  group $H'$, we must be in the second case, so that $\Lh'\subset\Lt\oplus\R\xi'_\gamma.$ 
  
  Note however that if $\xi'_\gamma=\xi_a+\xi_\Lr$ with $\xi_\La\in \La, \xi_\Lr\in\Lr$, then the condition $J\Lh'\cap\Lh'\neq\{0\}$ implies that $\xi_\Lr=0$, since else we would have that $J\xi_r$ and $\xi_r$ span the same real line. This implies thus that $\gamma'\in (\C^*)^n$, and so $\gamma'=d_\lambda$ and $\gamma=\gamma_N\circ d_\lambda$ with $\gamma_N\in N.$ Note that since $\gamma$ is central in $G_\gamma$, $\gamma_N$ is central in $N_\gamma:=N\cap G_\gamma$.
  
Consider now the holomorphic map $\phi\colon G_\gamma\rightarrow N_\gamma$, $g\mapsto q(g)^{-1}g$ and the induced holomorphic map $\phi'\colon G \rightarrow N_\gamma/\langle\gamma_N\rangle $. Then  $\phi'|_{H_c}\colon H_c\rightarrow N_\gamma/\langle\gamma_N\rangle$ is holomorphic and bounded, since $\phi'(H_c)\subset \phi'(H).$ On the other hand, as $\gamma_N\in\mathcal {Z}(N_\gamma)$, we have that if $\gamma_N\neq\id$ then $N_\gamma/\langle\gamma_N\rangle$ is biholomorphic to $\C^*\times\C^{\dim N_\gamma-1}$ via the exponential map, therefore we find by Liouville's theorem that $\phi'|_{H_c}$ is constant, and hence also $\phi'|_H$ is constant by density. In particular, as $\hat H:=\exp\Lh\subset G_\gamma$ is connected, we also have that $\phi|_{\hat H}$ is constant and so $\hat H\subset G_\lambda^{red}.$ At the same time, the argument from before shows that $\hat H$ intersects $\langle \gamma\rangle$ non trivially, therefore $\gamma=d_\lambda\in G^{red}_{\lambda}$ is indeed diagonal.

When $\gamma=d_\lambda$ is diagonal, writing  $\lambda_j=r_je^{i\theta_j}$ for $j\in\{1,\ldots, n\}$, we find that $\Lh\cap J\Lh$ is generated over $\C$ by 
\[Z:=\sum_{j=1}^n\ln r_jz_j\frac{\del}{\del z_j}.\]
This concludes the proof. \end{proof}

     \section{Cohomology and analytic invariants}\label{sec: Coh}

We present in this section cohomological properties of primary Hopf manifolds. Since these have the tolological type of $\mathbb{S}^1\times\mathbb{S}^{2n-1},$ de Rham cohomology is readily computable. On the other hand, the complex cohomologies, such as Dolbeault and Bott-Chern, are more intricate and can be detected via the analytic family constructed by \ref{deformation}.

For the next result we follow the ideas of proof of Mall \cite{m} for which we provide a few more details. Note that different proofs are possible, by using for instance the existence of Vaisman metrics on diagonal Hopf manifolds.

\begin{theorem}{\cite[Thm. 3]{m}}\label{dolbeault}
The Hodge numbers of an $n$-dimensional primary Hopf manifold $W_{\gamma}$ are given by
\begin{equation*}
         h_{\overline{\partial}}^{p,q}=\begin{cases}
      1, & \text{if}\,\, (p, q) \in \{(0, 0), (0,1), (n-1,n), (n,n)\}\\
      0, & \text{otherwise.}
    \end{cases}      
      \end{equation*} 
      In particular, Hopf manifolds do not satisfy the $\partial\overline{\partial}$-lemma, however their Frölicher spectral sequence degenerates at first page.
\end{theorem}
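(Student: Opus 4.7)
The strategy, following Mall, is to reduce to the diagonal case $W_{d_\lambda}$ via the deformation of \ref{deformation} and upper-semicontinuity, compute the Hodge numbers there by hand, and finally pin down the equalities by a lower bound from the topology of $\mathbb{S}^1\times\mathbb{S}^{2n-1}$.

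First, I would establish the \emph{upper bound} for $W_\gamma$ from the diagonal case. By \ref{deformation}, there exists a holomorphic family $p\colon\mathcal X\to \C$ with $p^{-1}(0)= W_{d_\lambda}$ and $p^{-1}(t)\cong W_\gamma$ for every $t\neq 0$. Since Dolbeault cohomology dimensions are upper-semicontinuous in holomorphic families (by Grauert applied to $R^q p_*\Omega^p_{\mathcal X/\C}$, or directly from Kodaira–Spencer theory), we obtain
\[
h^{p,q}_{\bar\partial}(W_\gamma)\leq h^{p,q}_{\bar\partial}(W_{d_\lambda})
\]
for all $(p,q)$. So it is enough to prove the upper bound in the diagonal case.

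Next I would handle the diagonal case directly. Any Dolbeault class on $W_{d_\lambda}$ pulls back to a $d_\lambda$-equivariant class on $W=\C^n\setminus\{0\}$, and conversely such equivariant classes descend. I would expand a $\bar\partial$-closed representative using Laurent monomials, writing terms of the form $z^a\bar z^b\, dz_I\wedge d\bar z_J$. Equivariance under $d_\lambda$ then forces an equation $\lambda^{a}\bar\lambda^{b}\prod_{i\in I}\lambda_i\prod_{j\in J}\bar\lambda_j^{-1}=1$ on each non-vanishing term; since all $|\lambda_i|<1$, only very restricted tuples $(a,b,I,J)$ can occur. Combined with the known cohomology of $\C^n\setminus\{0\}$ — namely $H^q(W,\Omega^p)=0$ except in degrees $q\in\{0,n-1\}$, with $H^0$ identified with holomorphic $p$-forms extending across the origin by Hartogs and $H^{n-1}$ realized via Laurent tails at $0$ — a careful bookkeeping yields that the only nonzero invariant classes are the four prescribed ones (the constant $1$, the Lee class in bidegree $(0,1)$, its Serre dual in top-minus-one degree, and the volume class). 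This step is the main technical obstacle: handling the non-Steinness of $W$ and correctly tracking the $H^{n-1}$-contribution is where Mall's argument does the bulk of its work.

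Finally, I would produce the matching \emph{lower bound} from topology. Since $W_\gamma$ is diffeomorphic to $\mathbb{S}^1\times\mathbb{S}^{2n-1}$, one has $b_k(W_\gamma)=1$ for $k\in\{0,1,2n-1,2n\}$ and $b_k=0$ otherwise. The Frölicher inequality
\[
b_k(W_\gamma)\leq\sum_{p+q=k}h^{p,q}_{\bar\partial}(W_\gamma),
\]
together with connectedness ($h^{0,0}\geq 1$) and Serre duality $h^{p,q}=h^{n-p,n-q}$, forces each of the four listed Hodge numbers to be at least $1$. Combined with the upper bound from the two previous steps, this gives equality throughout and vanishing of all other Hodge numbers. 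The two final claims then follow at once: the Hodge asymmetry $h^{0,1}=1\neq 0=h^{1,0}$ obstructs the $\partial\bar\partial$-lemma, while the identities $\sum_{p+q=k}h^{p,q}=b_k$ in every degree (now known) force all higher differentials in the Frölicher spectral sequence to vanish, i.e.\ degeneration at $E_1$.
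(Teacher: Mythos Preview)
Your global strategy --- compute the diagonal case, push the upper bound to arbitrary $\gamma$ via \ref{deformation} and upper-semicontinuity, then pin down equality with the Fr\"olicher inequality and Serre duality --- is exactly the paper's. The gap is in your sketch of the diagonal computation.

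You write that a Dolbeault class on $W_{d_\lambda}$ ``pulls back to a $d_\lambda$-equivariant class on $W$, and conversely such equivariant classes descend,'' and then try to impose invariance on smooth representatives expanded as Laurent monomials. This misses half the cohomology. Pullback $H^{p,q}_{\bar\partial}(W_{d_\lambda})\to H^{p,q}_{\bar\partial}(W)^{\langle d_\lambda\rangle}$ is not injective: the nonzero class in $H^{0,1}_{\bar\partial}(W_{d_\lambda})$ pulls back to zero on $W$, since $H^1(W,\mathcal O_W)=0$ for $n\geq 2$. What you are describing captures only the invariants $H^0(\Z,H^q(W,\Omega^k_W))$, whereas the correct answer involves both invariants \emph{and} coinvariants. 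Concretely, the paper runs the Grothendieck spectral sequence $E_2^{p,q}=H^p(\Z,H^q(W,\Omega^k_W))\Rightarrow H^{p+q}(W_{d_\lambda},\Omega^k)$, which collapses at $E_2$ because $\Z$ has cohomological dimension one, and then computes each $h^{k,l}_{\bar\partial}$ as a sum of a kernel and a cokernel of $(\mathrm{Id}-d_\lambda^*)$ acting on $H^l(W,\Omega^k_W)$ and $H^{l-1}(W,\Omega^k_W)$. The class in bidegree $(0,1)$ comes precisely from $\mathrm{coker}\bigl((\mathrm{Id}-d_\lambda^*)\big|_{H^0(W,\mathcal O_W)}\bigr)\cong\C$, which your ``equivariant classes'' framework does not see.

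Two smaller points: expanding a smooth $\bar\partial$-closed form in monomials $z^a\bar z^b dz_I\wedge d\bar z_J$ is not a priori available (smooth functions on $W$ are not Laurent polynomials), so even for the invariants this needs justification; and in your invariance equation the factor should be $\prod_{j\in J}\bar\lambda_j$, not $\prod_{j\in J}\bar\lambda_j^{-1}$. Once you replace the ``equivariant classes'' step by the group-cohomology spectral sequence and compute the four kernels/cokernels of $(\mathrm{Id}-d_\lambda^*)$ on holomorphic power series (and their Laurent-tail analogues for $H^{n-1}$), the rest of your outline goes through exactly as in the paper.
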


\begin{proof}

We compute first the Hodge numbers of a diagonal Hopf manifold $X=W_{d_{\lambda}}$, where $\lambda \in (\mathbb{D}^*)^n$. We will take a spectral sequence approach, since this will simplify some computations from \cite{m}. By \cite[Subsection 5.2]{gro}, the Dolbeault cohomology of $X$ can be computed as $\mathbb{Z}$-equivariant cohomology of $W$. More precisely, for each fixed $k\in\N$, 
one can consider a spectral sequence $E_r^{\bullet, \bullet}$ whose second page is given by $$E_2^{p, q}=H^p(\mathbb{Z}, H^q(W, \Omega^k_{W})) \Rightarrow H^{p+q}(X, \Omega^k_{X}),$$
where $\mathbb{Z}=\langle d_\lambda \rangle$ acts on a class $[\omega]\in H^q(W, \Omega^k_{W})$ by $d_\lambda \cdot [\omega]=[d_{\lambda}^*\omega].$ 

Since $\mathbb{Z}$ is of rank 1, $E_2^{p, \bullet}=0$ for $p \geq 2$, therefore 
$E_{\infty}^{\bullet, \bullet}=E_2^{\bullet, \bullet}$ and we get
\begin{equation*}
    h^{k, l}_{\overline{\partial}}:=\mathrm{dim}_{\mathbb{C}}H^l(X, \Omega_X^k)=\mathrm{dim}_{\mathbb{C}}H^0(\mathbb{Z}, H^l(W, \Omega_W^k))+\mathrm{dim}_{\mathbb{C}}H^1(\mathbb{Z}, H^{l-1}(W, \Omega_W^k)).
\end{equation*} 
Furthermore, since $\Omega^k_W$ is free over $\mathcal{O}_W$ and since by \cite[Lemma 3.2]{h} we have that $H^p(W,\mathcal{O}_W)=0$ for $p\neq 0, n-1$, we readily infer that $h^{k,l}_{\overline{\partial}}=0$ for $1<l<n-1$ and we are left with computing
$$h^{k, 0}_{\overline{\partial}}= \mathrm{dim}_{\mathbb{C}}H^0(\mathbb{Z}, H^0(W, \Omega_W^k))=\mathrm{dim}_{\mathbb{C}} \, \mathrm{Ker} (\mathrm{Id}-d^*_{\lambda})_{|H^{0}(W, \Omega^{k}_W)}$$ 
$$h^{k, 1}_{\overline{\partial}}= \mathrm{dim}_{\mathbb{C}} H^1(\mathbb{Z}, H^0(W, \Omega_W^k)) =\mathrm{dim}_{\mathbb{C}}\mathrm{CoKer} (\mathrm{Id}-d^*_{\lambda})_{|H^{0}(W, \Omega^{k}_W)}$$
$$h^{k, n-1}_{\overline{\partial}}=\mathrm{dim}_{\mathbb{C}}H^0(\mathbb{Z}, H^{n-1}(W, \Omega_W^k))=\mathrm{dim}_{\mathbb{C}} \mathrm{Ker} (\mathrm{Id}-d^*_{\lambda})_{|H^{n-1}(W, \Omega^{k}_W)}$$
$$h^{k, n}_{\overline{\partial}}=\mathrm{dim}_{\mathbb{C}}H^1(\mathbb{Z}, H^{n-1}(W, \Omega_W^k))=\mathrm{dim}_{\mathbb{C}} \mathrm{CoKer} (\mathrm{Id}-d^*_{\lambda})_{|H^{n-1}(W, \Omega^{k}_W)}.$$

Clearly $h^{0,0}_{\overline{\partial}}=1$. We start by showing that $h^{k, 0}_{\overline{\partial}}=0$ for all  $k \neq 0$. Indeed, take a $d_\lambda$-invariant holomorphic $k$-form on $W$
\begin{equation}\label{omega}
\omega=\sum_{I} \sum_{\alpha \in \mathbb{N}^n} a^I_{\alpha} z^{\alpha}dz_{I},
\end{equation}
where $I=\{(i_1, \ldots, i_k) | 1\leq i_1 < \ldots < i_k \leq n\}$, $dz_{I}=dz_{i_1} \wedge \ldots \wedge dz_{i_k}$, $z^{\alpha}=z_1^{\alpha_1}\ldots z_n^{\alpha_n}$ and $a_{\alpha} \in \mathbb{C}$. The condition $d_{\lambda}^*\omega=\omega$ reads as 
\begin{equation}\label{inv}
    a_{\alpha}^{I}\lambda^{\alpha}\cdot \lambda_{i_1} \ldots \lambda_{i_k}=a_{\alpha}^{I}
\end{equation}
for any $I$, where $\lambda^{\alpha}=\lambda_{1}^{\alpha_1} \ldots \lambda_{n}^{\alpha_n}$. If $k \neq 0$, since $|\lambda_{i_1}|\leq ...\leq |\lambda_{i_p}|<1$, \eqref{inv} can take place if and only if $a_{\alpha}^I=0$, implying $\omega=0$. 

In order to show $h^{0,1}_{\overline{\partial}}=1$, we notice that 
$$\mathrm{CoKer} (\mathrm{Id}-d^*_{\lambda})_{|H^{0}(W, \mathcal{O}_W)}=\frac{H^0(W,  \mathcal{O}_{W})}{\mathrm{Im}(\mathrm{Id}-d^*_{\lambda})} \simeq \mathbb{C},$$ 
since any holomorphic function $f$ on $W$ of the form
$$f=\sum_{\alpha \in \mathbb{N}^n \setminus \{0\}} f_{\alpha}z^{\alpha}=g-d_\lambda^*g  \text{ for } g:=\sum_{\alpha \in \mathbb{N}^n \setminus \{0\}} \frac{f_{\alpha}}{1-\lambda^{\alpha}}z^{\alpha}$$
and the series defining $g$ converges, since $(1/(1-\lambda^\alpha))_{\alpha\in\N^n\setminus\{0\}}$ is bounded.

For $k\neq 1$, we have  $\frac{H^0(W,  \Omega^k_{W})}{\mathrm{Im}(\mathrm{Id}-d^*_{\lambda})}=0$ since for any $\omega$ presented as in \eqref{omega}, we have 
\begin{equation*}
    \omega=(\mathrm{Id}-d^*_{\lambda})(h),\quad h:=\sum_{I} \sum_{\alpha \in \mathbb{N}^n} \frac{a^I_{\alpha}}{1-\lambda^{\alpha}\cdot \lambda_{i_1} \ldots \lambda_{i_p}} z^{\alpha}dz_{I}\in H^0(W,\Omega^k_W).
\end{equation*}
This implies that $h^{k,1}_{\overline{\partial}}=0$ for $k\neq 1$.

By Serre duality, the only Dolbeault numbers left to compute are $h^{0,n-1}_{\overline{\partial}}$, $h^{1,n-1}_{\overline{\partial}}$, $h^{0,n}_{\overline{\partial}}$ and $h^{1,n}_{\overline{\partial}}$. 
By \cite[Lemma 3.2]{h}, we have
$$H^{n-1}(W,\mathcal{O}_W)\cong\{\sum_{\alpha\in(\Z_{<0})^n}a_\alpha z^\alpha \text{ convergent on }(\C^*)^n\} $$
and the induced $\Z$-action on the right hand-side is generated by the natural action of $d_\lambda$ on holomorphic functions on $(\C^*)^n.$ Then the same reasoning as above gives us the remaining Dolbeault numbers.

For the general case, we use that any primary Hopf manifold $W_\gamma$ with $\gamma\in G_\lambda$ is arbitrarily close to the diagonal one $W_{d_\lambda}$ via \ref{deformation}. Then, by the upper semi-continuity of the Hodge numbers (see \cite[Ch. 3, Thm. 4.12]{bs}), we get $h_{\overline{\partial}}^{p, q}(W_{\gamma}) \leq h_{\overline{\partial}}^{p, q}(W_{d_{\lambda}})$ for any degree $(p,q)$. In particular, the numbers $h^{p, q}_{\overline{\partial}}(W_{\gamma})$ are lower than $1$ for $(p, q) \in \{(0, 0), (0,1), (n-1,n), (n,n)\}$ and vanish in any other degree. The final result is obtained by applying the Fr\" olicher inequality $h^{0, 1}_{\overline{\partial}} \geq b_1(W_\gamma)=1$.
\end{proof}

For the computation of the Bott-Chern numbers, we will in fact use the existence of Vaisman metrics, cf. Section \ref{sec: Metrics}.

\begin{theorem} The Bott-Chern numbers of a primary Hopf manifold are given by
      \begin{equation*}
         h_{BC}^{p,q}=\begin{cases}
      1, & \text{if}\,\, (p, q) \in \{(0, 0), (1,1), (n-1,n), (n, n-1), (n,n)\}\\
      0, & \text{otherwise.}
    \end{cases}      
      \end{equation*} 
\end{theorem}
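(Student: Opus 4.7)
I would parallel the Dolbeault computation by first establishing the result for a diagonal Hopf manifold $W_{d_\lambda}$ and then extending to general primary Hopf manifolds via the deformation of \ref{deformation} combined with upper semi-continuity of Bott-Chern numbers under small deformations (a standard consequence of the ellipticity of the Bott-Chern Laplacian). The values at $(0,0)$ and $(n,n)$ are automatic, so the substantive work lies in bidegrees $(1,1)$, $(n-1,n)$, and $(n,n-1)$, together with vanishing in all other bidegrees.

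For the lower bound $h_{BC}^{1,1}(W_\gamma) \geq 1$, the natural candidate is the $d$-closed $(1,1)$-form $\omega_0 := i\partial\bar\partial \log \phi$, where $\phi$ is the automorphic Kähler potential on $W$ afforded by the lcK structure of Section \ref{sec: Metrics}, satisfying $\phi(\gamma z) = c\, \phi(z)$ for some $c \in (0,1)$. To show that $[\omega_0]_{BC} \neq 0$ I would assume $\omega_0 = i\partial\bar\partial\psi$ on $W_\gamma$ and lift to $W$: the function $\log\phi - \tilde\psi$ becomes pluriharmonic on $W$, hence by Hartogs equals $2\Re h + c_0$ for some $h$ holomorphic on $\C^n$. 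The $\gamma$-equivariance then forces $2\Re(h - h\circ\gamma) \equiv -\log c$, a contradiction at $z = 0$ since $\gamma(0) = 0$. For the lower bounds at $(n-1,n)$ and $(n,n-1)$, I would combine Serre duality $H_{BC}^{p,q} \cong H_A^{n-p,n-q\,\ast}$ with the Angella-Tomassini inequality $\sum_{p+q=1}(h_{BC}^{p,q} + h_A^{p,q}) \geq 2 b_1 = 2$. Since $h_{BC}^{p,0} \leq h_{\bar\partial}^{p,0}$ vanishes for $p \geq 1$ (the image of $\partial\bar\partial$ is trivial in these bidegrees), the inequality forces $h_A^{1,0}, h_A^{0,1} \geq 1$, giving the required lower bounds.

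For the upper bounds and the vanishing in the remaining bidegrees on $W_{d_\lambda}$, I would adapt Mall's $\Z$-equivariant approach from \ref{dolbeault} to Bott-Chern cohomology: lift a Bott-Chern class on $W_{d_\lambda}$ to a $d$-closed $(p,q)$-form on $W$ invariant under $\langle d_\lambda\rangle$, and use the vanishing $H^k(W,\Omega^p_W)=0$ for $k\neq 0,n-1$ from \cite[Lemma 3.2]{h} together with Hartogs to describe Bott-Chern classes on $W$. The resulting identification of $H^{p,q}_{BC}(W_{d_\lambda})$ with the cokernel of $\id-d_\lambda^\ast$ on appropriate spaces of holomorphic (or anti-holomorphic) functions or forms reduces the dimension count to a direct series argument analogous to that in the proof of \ref{dolbeault}, exploiting that $|\lambda^\alpha|<1$ for $\alpha\neq 0$.

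The principal obstacle is organizing the upper bound computation uniformly across bidegrees, especially at $(1,1)$ and $(n-1,n)$ where the cohomology is nontrivial and one must carefully track the $d_\lambda^\ast$-action on the relevant equivariant spaces. The low-dimensional case $n=2$ requires separate attention since $H^1(W,\mathcal{O}_W)\neq 0$, slightly complicating the lifting step. The Vaisman structure associated to \ref{prop: HLee} provides an alternative approach via $S^1$-averaging onto a basic (Sasakian) cohomology, but the equivariant route above has the advantage of being uniform on the universal cover and of parallelling the Dolbeault proof directly.
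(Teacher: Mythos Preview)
Your route differs substantially from the paper's. The paper does not compute anything directly: it observes that diagonal Hopf manifolds are Vaisman (Section~\ref{sec: Metrics}), invokes \cite[Thm~3.1, Cor.~5.2]{io} to read off the Bott--Chern numbers from the Dolbeault numbers of \ref{dolbeault}, and then passes to the non-diagonal case via \ref{deformation} together with \cite[Cor.~5.4]{io}, which gives \emph{constancy} (not merely upper semi-continuity) of Bott--Chern numbers under small deformations of Vaisman manifolds. Ironically, the ``alternative approach via $S^1$-averaging onto a basic (Sasakian) cohomology'' that you mention in passing is essentially what \cite{io} does, and is the entire proof here.

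Your lower-bound arguments are sound and self-contained: the lcK-with-potential argument for $h_{BC}^{1,1}\geq 1$ is correct (the potential is supplied by the remark following \ref{Brunella}, not by Brunella's construction itself), and the Angella--Tomassini/Serre-duality argument for $(n-1,n)$ and $(n,n-1)$ is clean. Combined with upper semi-continuity, these would suffice once the diagonal case is settled.

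The genuine gap is in your upper-bound step. Mall's argument in \ref{dolbeault} rests on the identification $H^{p,q}_{\bar\partial}(X)=H^q(X,\Omega^p_X)$ and the Grothendieck spectral sequence for the $\Z$-quotient; Bott--Chern cohomology has no such single-sheaf description. It is the hypercohomology of a three-term complex of sheaves (Schweitzer), so the equivariant spectral sequence you would need has a more complicated $E_2$-page, and your proposed identification of $H^{p,q}_{BC}(W_{d_\lambda})$ with a cokernel of $\id-d_\lambda^*$ on ``appropriate spaces of holomorphic forms'' is not available without first carrying out that hypercohomology computation. This can be done, but it is a real piece of work rather than a parallel to \ref{dolbeault}, and you have not indicated how the mixed $(\partial,\bar\partial)$-structure of the Schweitzer complex interacts with the $\Z$-action. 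Absent that, the diagonal upper bounds---and hence the whole argument---remain incomplete.
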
   

\begin{proof}
A diagonal Hopf manifold admits a Vaisman metric, following the discussion of Section \ref{sec: Metrics}. For such manifolds, the Bott-Chern numbers  can be recovered from the Dolbeault cohomology, as shown in \cite[Thm 3.1, Cor. 5.2]{io}, retrieving the numbers in the statement. As a general primary Hopf manifold is arbitrarily close to a diagonal one, and since  \cite[Cor. 5.4]{io} states  that the Bott-Chern numbers remain constant for a small deformation of a compact Vaisman manifold, we infer the statement for any primary Hopf manifold. \end{proof}

     \begin{proposition} The Kodaira dimension of an arbitrary Hopf manifold is $-\infty$.
\end{proposition}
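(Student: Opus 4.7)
The plan is to show that the plurigenera $P_m(X) := \dim H^0(X,K_X^{\otimes m})$ vanish for every $m\geq 1$, from which $\kappa(X)=-\infty$ follows by definition.

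First I would reduce to the primary case: any secondary Hopf manifold admits a primary one as a finite unramified cover (by the theorem of Kato--Haller--Kodaira recalled at the start of Section \ref{sec: NormalForm}), and a pluricanonical form on the base pulls back to a pluricanonical form on the cover, so $\kappa$ of the primary cover being $-\infty$ forces the same for the secondary manifold.

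So assume $X=W_\gamma$ is primary, with $\gamma\in\Aut(\C^n,0)$ a contraction whose eigenvalues $\lambda_1,\ldots,\lambda_n$ lie in $\mathbb D^*$. The key observation is that the canonical bundle $K_W$ of $W=\C^n\setminus\{0\}$ is trivialized by $\Omega:=dz_1\wedge\cdots\wedge dz_n$. Hence an element of $H^0(W_\gamma,K_{W_\gamma}^{\otimes m})$ pulls back to a $\gamma$-invariant section $\tilde\sigma=f\cdot\Omega^{\otimes m}$ of $K_W^{\otimes m}$, where $f\in\mathcal O(W)$, which extends to $\mathcal O(\C^n)$ by Hartogs. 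The invariance $\gamma^*\tilde\sigma=\tilde\sigma$ translates into the functional equation
\[ f(\gamma(z))\cdot J_\gamma(z)^m=f(z),\qquad z\in\C^n,\]
where $J_\gamma(z):=\det(d_z\gamma)$.

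Iterating this equation $k$ times and using the chain rule gives
\[ f(z)=f(\gamma^k(z))\cdot\prod_{i=0}^{k-1}J_\gamma(\gamma^i(z))^m.\]
Fix any $z\in\C^n$. Since $\gamma$ is a contraction, $\gamma^i(z)\to 0$, so by continuity $J_\gamma(\gamma^i(z))\to J_\gamma(0)=\lambda_1\cdots\lambda_n$, a complex number of modulus strictly less than $1$. Thus there exist $r<1$ and $i_0\in\N$ so that $|J_\gamma(\gamma^i(z))|^m<r$ for all $i\geq i_0$, and the infinite product $\prod_{i\geq 0}J_\gamma(\gamma^i(z))^m$ converges absolutely to $0$. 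On the other hand $f(\gamma^k(z))\to f(0)$ is bounded, so taking $k\to\infty$ we obtain $f(z)=0$. Hence $f\equiv 0$ and $H^0(W_\gamma,K_{W_\gamma}^{\otimes m})=0$ for every $m\geq 1$.

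The only mildly delicate step is the convergence argument for the product, which is standard once one has the contraction property of $\gamma$ and the fact that $|\det d_0\gamma|<1$; everything else is formal. Alternatively, one could bypass the general contraction by first doing the calculation for $\gamma=d_\lambda$ (where the invariance condition collapses to the arithmetic identity $a_\alpha\lambda^\alpha(\lambda_1\cdots\lambda_n)^m=a_\alpha$ on Taylor coefficients, forcing $a_\alpha=0$), and then deducing the general case by upper semi-continuity of $h^0(\,\cdot\,,K^{\otimes m})$ applied to the deformation family of \ref{deformation}. I prefer the first approach as it avoids invoking semi-continuity and yields the result uniformly in $\gamma$.
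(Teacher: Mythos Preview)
Your argument is correct. The reduction to the primary case is fine, the functional equation $f(\gamma(z))J_\gamma(z)^m=f(z)$ is exactly what $\gamma$-invariance of $f\cdot\Omega^{\otimes m}$ amounts to, and the iteration together with $|\det d_0\gamma|=|\lambda_1\cdots\lambda_n|<1$ legitimately forces $f\equiv 0$.

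Your route differs from the paper's. The paper first treats the diagonal case by observing that the $d_\lambda$-invariant meromorphic $n$-form $\frac{dz_1\wedge\cdots\wedge dz_n}{z_1\cdots z_n}$ descends to $W_{d_\lambda}$, so that $K_{W_{d_\lambda}}$ is represented by the anti-effective divisor $-W_1-\cdots-W_n$ and all plurigenera vanish; it then passes to an arbitrary $W_\gamma$ via the isotrivial family of \ref{deformation} and upper semi-continuity of $t\mapsto h^0(p^{-1}(t),K^{\otimes m})$. This is precisely the alternative you sketch at the end. What your direct approach buys is uniformity: you never invoke the Poincaré--Dulac normal form, the deformation family, or semi-continuity, and the proof works verbatim for any contraction. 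What the paper's approach buys is a slightly more geometric statement in the diagonal case (an explicit anti-effective canonical divisor rather than just $P_m=0$), and it illustrates once more the general principle, used repeatedly in Section~\ref{sec: Coh}, of reducing questions about $W_\gamma$ to $W_{d_\lambda}$ via \ref{deformation}.
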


\begin{proof} 
Since any Hopf manifold is finitely covered by a primary one, it is enough to show the result for any Hopf manifold of the form $W_{\gamma}$, with $\gamma\in G_\lambda$ in normal form. We first show that for the diagonal Hopf $W_{d_{\lambda}}$, all plurigenera vanish. Indeed, the meromorphic form on $W$
$$\frac{dz_1 \wedge \ldots \wedge dz_n}{z_1 z_2 ... z_n}$$ is invariant under $d_\lambda$, hence descends to a form $\omega$ on $W_{d_\lambda}$. The canonical divisor $K_{W_{d_\lambda}}$ is then given by $div(\omega)=-W_1 - \ldots - W_n$, where $W_i$ is the complex hypersurface defined by $\{z_i=0\}$. 
 Consequently, the canonical bundle $K_{W_{d_{\lambda}}}$ is anti-effective and so the plurigenera vanish for $W_{d_{\lambda}}$.

Consider the family of deformation $p:\mathcal{X} \rightarrow \mathbb{C}$ in \ref{deformation} such that $p^{-1}(0) \simeq W_{d_{\lambda}}$ and $p^{-1}(t) \simeq W_{\gamma}$, for any $t \neq 0$. By \cite[Ch. 3, Thm. 4.12]{bs}, for any $l>0$,  the plurigenera $P_t: =\mathrm{dim}_{\mathbb{C}}H^0(p^{-1}(t), K_{p^{-1}(t)}^l)$  give upper semi-continuous functions $t \mapsto P_t$, therefore the plurigenera of $W_\gamma$ vanish as well and $\mathrm{Kod}(W_{\gamma})=-\infty$. 
\end{proof}

The following result computes the algebraic dimension of a diagonal Hopf manifold in terms of the relations between the $\lambda_j's$. We note in particular that all possible values strictly lower than the complex dimension can be attained as algebraic dimensions. Of course there is no Moishezon Hopf manifold, as Hopf manifolds do not  satisfy the $\partial\overline{\partial}$-lemma. We also note that since the algebraic dimension is not upper-semicontinuous in general \cite{FP}, we cannot directly infer anything about the algebraic dimension of a non-diagonal Hopf manifold.

For any $\lambda\in(\C^*)^n$, let us introduce the group homomorphism 
$$\psi_\lambda \colon\Z^n\rightarrow \C^*, \quad m\mapsto \lambda^m.  $$

\begin{proposition}\label{thm: AlgDim}
Let $W_{d_\lambda}$ be an $n$-dimensional diagonal Hopf manifold. Then $a(W_{d_\lambda})=\mathrm{rank}(\ker \psi_\lambda).$ 
\end{proposition}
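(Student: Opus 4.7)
The plan is to identify the field of meromorphic functions on $W_{d_\lambda}$ explicitly as $\C(w_1,\ldots,w_r)$, where $r:=\mathrm{rank}(\ker\psi_\lambda)$ and $w_i:=z^{m^{(i)}}$ for a chosen $\Z$-basis $\{m^{(1)},\ldots,m^{(r)}\}$ of $\ker\psi_\lambda$; this would give $a(W_{d_\lambda})=r$. The inequality $a(W_{d_\lambda})\geq r$ is immediate: each $w_i$ is a Laurent monomial on $\C^n$, meromorphic everywhere (extending across $\{0\}$ by Hartogs, as $\{0\}$ has codimension $n\geq 2$), $d_\lambda$-invariant because $\lambda^{m^{(i)}}=1$, hence descends to $W_{d_\lambda}$; and the $w_i$'s are algebraically independent since the $m^{(i)}$'s are $\Z$-linearly independent in $\Z^n$.

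For the reverse inequality, the crucial geometric observation is that $\ker\psi_\lambda\otimes_\Z\R$ meets the closed positive orthant $\R_{\geq 0}^n$ only at the origin: $m\in\ker\psi_\lambda$ implies $|\lambda^m|=1$, so $m$ lies in the hyperplane orthogonal to $\log|\lambda|:=(\log|\lambda_i|)_i$, and since all $\log|\lambda_i|<0$ this hyperplane intersects $\R_{\geq 0}^n$ trivially. Consequently, for every $\alpha_0\in\N^n$ the translate $(\alpha_0+\ker\psi_\lambda)\cap\N^n$ is finite (a bounded polytope). This forces any \emph{holomorphic semi-invariant} $P\in\calo(\C^n)$ of weight $c\in\C^*$ (meaning $d_\lambda^*P=cP$) to be a polynomial, since the Taylor expansion $P=\sum a_\alpha z^\alpha$ has $a_\alpha=0$ outside the finite set $\{\alpha\in\N^n:\lambda^\alpha=c\}$.

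To handle a general $d_\lambda$-invariant meromorphic function $f$, we lift it to $\C^n$ via Hartogs and write $f=P/Q$ with $P,Q\in\calo(\C^n)$ having disjoint zero divisors. The identity $d_\lambda^*P\cdot Q=P\cdot d_\lambda^*Q$ and local coprimeness in the UFD $\calo_{\C^n,0}$ yield a common nowhere-vanishing holomorphic factor $u\in\calo^*(\C^n)$ with $d_\lambda^*P=uP$ and $d_\lambda^*Q=uQ$. Writing $u=e^v$ (possible since $\C^n$ is simply connected), we absorb $u$ into a multiplicative constant by seeking $g\in\calo(\C^n)$ with $g-g\circ d_\lambda=v-v(0)$; a telescoping argument produces the explicit candidate
\[ g(z):=\sum_{k\geq 0}\bigl(v(d_\lambda^k z)-v(0)\bigr). \]
Then $\tilde P:=e^g P$ and $\tilde Q:=e^g Q$ satisfy $d_\lambda^*\tilde P=c\tilde P$ and $d_\lambda^*\tilde Q=c\tilde Q$ with $c:=e^{v(0)}$, so by the previous step they are polynomials with supports in a common coset $\alpha_0+\ker\psi_\lambda$. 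Factoring $z^{\alpha_0}$ from numerator and denominator expresses $f=\tilde P/\tilde Q$ as a ratio of Laurent polynomials in $w_1,\ldots,w_r$, so $f\in\C(w_1,\ldots,w_r)$.

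The main obstacle is ensuring that the series defining $g$ converges, and this is precisely where the contracting nature of $d_\lambda$ (all eigenvalues in $\mathbb{D}^*$) enters decisively. Setting $\lambda_M:=\max_i|\lambda_i|<1$, the iterates $d_\lambda^k z$ tend to $0$ geometrically, and the local Lipschitz behavior of $v$ near the origin gives $v(d_\lambda^k z)-v(0)=O(\lambda_M^k)$, making the defining series converge uniformly on compacts and yield an entire $g$. Once this ``linearization'' step is in hand, the rest is standard invariant theory for a diagonal torus action: the finiteness of $(\alpha_0+\ker\psi_\lambda)\cap\N^n$ makes semi-invariants polynomial, and the rational invariants are generated by the Laurent monomials $w_i$.
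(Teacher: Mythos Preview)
Your argument is correct and gives a genuinely different proof of the upper bound than the paper's. Both proofs obtain $a(W_{d_\lambda})\geq r$ in the same way, by exhibiting the $r$ algebraically independent invariant monomials $z^{m^{(i)}}$. For the reverse inequality, however, the paper does not attempt to describe $\C(W_{d_\lambda})$; instead it first treats the base case $\ker\psi_\lambda=\{0\}$ by a bare-hands comparison of minimal monomials in $g\cdot d_\lambda^*h=h\cdot d_\lambda^*g$, and then, in the general case, finds $n-k$ coordinates $z_{k+1},\ldots,z_n$ with $\ker\psi_{(\lambda_{k+1},\ldots,\lambda_n)}=\{0\}$, so that the sub-Hopf manifold $Y=\{z_1=\cdots=z_k=0\}\cong W_{d_\mu}$ has $a(Y)=0$ by the base case, and concludes via Ueno's inequality $a(X)\leq a(Y)+\codim(Y)$. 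Your route is more analytic and more informative: by solving the additive coboundary equation $g-g\circ d_\lambda=v-v(0)$ via the telescoping series (whose convergence is exactly where the contraction hypothesis enters), you upgrade the a priori holomorphic unit $u$ to a constant weight, forcing $\tilde P,\tilde Q$ to be polynomial semi-invariants supported on a single coset of $\ker\psi_\lambda$; this yields the explicit identification $\C(W_{d_\lambda})=\C(w_1,\ldots,w_r)$, which is strictly stronger than the mere equality of transcendence degrees. The trade-off is that the paper's argument is shorter and avoids the linearization step by outsourcing the work to Ueno's general result, whereas your proof is self-contained and uniform in $k$ but requires the cocycle-solving step. One minor point of phrasing: ``disjoint zero divisors'' should be read as ``zero divisors with no common irreducible component'' (coprimeness in every local ring $\calo_{\C^n,x}$), not as set-theoretic disjointness of the zero loci, and this weaker notion is exactly what your divisor argument uses and what the solution of the Poincar\'e problem on $\C^n$ provides.
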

\begin{proof}
Let us first assume that $\ker\psi_\lambda=\{0\}$ and show that $W_{d_{\lambda}}$ has no meromorphic function, whence vanishing algebraic dimension. Assume, by contradiction, that $f$ is the pullback on $W$ of a non-constant meromorphic function on $W_{d_{\lambda}}$. Since $n\geq 2$, $f$ can be extended as a meromorphic function on $\mathbb{C}^n$ and be written as $\frac{g}{h}$, where $g$ and $h$ are holomorphic functions on $\mathbb{C}^n$ and $d_{\lambda}^*f=f$. Both $g$ and $h$ can be expressed as power series
\begin{equation*}
   g(z)=\sum_{m \in \mathbb{N}^n} a_mz^m, \quad 
    h(z)=\sum_{l \in \mathbb{N}^n} b_lz^l
\end{equation*}
satisfying $g(\lambda_1z_1, ..., \lambda_nz_n) h(z)=h(\lambda_1z_1, ..., \lambda_nz_n) g(z)$. This implies
\begin{equation*}
    \sum_{m \in \mathbb{N}^n}a_m\lambda^mz^m\sum_{l \in \mathbb{N}^n} b_lz^l=\sum_{m \in \mathbb{N}^n}a_mz^m\sum_{l \in \mathbb{N}^n} b_l\lambda^lz^l.
\end{equation*}
Take now the lowest degrees $k_1$ and $k_2$ in the expansions of $g$ and $h$, respectively and take $m_0=(m_{01}, \ldots, m_{0n})$ and $l_0=(l_{01}, \ldots, l_{0n})$ such that $m_{01}$ and $l_{01}$ are the smallest among the multi-indices $m$ and $l$ of degree $k_1$ and $k_2$, respectively.  Then 
\begin{equation*}
a_{m_0}b_{l_0}\lambda^{m_0}z^{m_0+l_0}=a_{m_0}b_{l_0}\lambda^{l_0}z^{m_0+l_0},
\end{equation*}
which gives $\lambda^{m_0}=\lambda^{l_0}$, contradicting the assumption. Thus indeed $a(W_{d_\lambda})=0.$

    In the general case, assuming that $k:=\mathrm{rank}(\ker\psi_\lambda)$, any $m\in\ker\psi_\lambda$ gives rise to the $d_\lambda$-invariant meromorphic function $z^m$ on $W$, and so to a meromorphic function $f_m$ on $W_{d_\lambda}$. In particular, we have $\C(f_m|m\in\ker\psi_\lambda)\subset \C(W_{d_\lambda})$. Now note that $\ker\psi_\lambda$ is a subgroup of $\Z^n$, therefore torsion free and so there exist $m_1,\ldots, m_k\in\ker\psi_\lambda$ so that $\ker\psi_\lambda=\Z\langle m_1,\ldots, m_k\rangle\cong\Z^k.$ We claim that the associated $k$ meromorphic functions $f_i=f_{m_i}$, $i\in\{1,\ldots, k\}$, are algebraically independent. Indeed, if there is some polynomial $0\neq P=\sum a_lX^l\in\C[X_1,\ldots, X_k]$ with $P(f_1,\ldots, f_k)=0$ then, for each $l\in\N^k$ with $a_l\neq 0$ we must have $\sum_{i=1}^kl_im_i=0$, which then implies that $l=0$ which is a contradiction. Therefore, we have $k\leq a(W_{d_\lambda})$. If $k=n-1$, this automatically implies that $a(W_{d_{\lambda}})=n-1$ since $a(W_{d_\lambda})<n.$

    For the case $k< n-1$, we still need  to show the converse inequality. In order to do so, note that there exists a subset $J\subset\{1,\ldots, n\}$ of cardinal $n-k$ so that $\ker\psi_\lambda\cap\langle e_j|j\in J\rangle=\{0\}$. After eventually reordering coordinates, we can assume that $J=\{k+1,\ldots, n\}$. By defining $\mu:=(\lambda_{k+1},..., \lambda_{n})\in(\mathbb{D}^*)^{n-k}$, it follows that $\ker\psi_\mu=\{0\}.$ At the same time, $Y:=\{z_1=...=z_k=0\}\subset W_{d_\lambda}$ is a complex submanifold of $W_{d_\lambda}$ of codimension $k$ which is biholomorphic to $W_{d_\mu}$. Since $\dim_\C Y>1$, the above discussion implies that $Y$ has algebraic dimension 0. Applying \cite[Thm 3.8 (ii)]{u}, we get $a(W_{d_{\lambda}})\leq a(Y)+\mathrm{codim}(Y)=k$.  We conclude therefore that $a(W_{d_\lambda})=k.$

\end{proof}

\begin{remark}
Note that for a generic $\lambda\in(\mathbb{D}^*)^n,$ $\psi_\lambda$ is injective and therefore $a(W_{d_\lambda})=0.$
\end{remark}

\begin{remark}
The simplest Hopf manifolds of positive algebraic dimension $k$ correspond to  $\lambda\in(\mathbb{D}^*)^n$ with $\lambda_1=...=\lambda_{k+1}$ and so that $\psi_\mu$ is injective, where $\mu=(\lambda_{k+1},..., \lambda_{n})\in(\mathbb{D}^*)^{n-k}$. In this case, the algebraic reduction is given by $\CP^k$. Indeed, consider the meromorphic map induced by the linear projection to the first $k+1$-coordinates:
\[ r\colon W_{d_\lambda}\dashrightarrow \mathbb{P}^{k}, \quad [z_1,\ldots, z_n]\mapsto (z_1:\cdots: z_{k+1}),\]
where $[z]$ denotes the image of $z\in W$ in $W_{d_\lambda}.$ Then $Z:=\{z_1=\ldots=z_{k+1}=0\}\subset W_{d_\lambda}$ is the indeterminacy locus of $r$ and $r$ lifts to a holomorphic map $r^*$ on the blow-up of $W_{d_\lambda}$ along $Z$  
\[ \mathrm{Bl}_ZW_{d_\lambda}=\{([z],(u_1:\cdots u_{k+1}))\mid z_iu_j=z_ju_i, \forall i,j\in\{1,\ldots, k+1\}\}\subset W_{d_\lambda}\times \mathbb{P}^{k}\]
which coincides with the projection to the second factor. In particular, the fibers of $r^*$ are connected, all isomorphic to the  $n-k$-dimensional diagonal Hopf manifold $W_{d_\mu}$. Since $a(W_{d_\lambda})=k$ by the above, it follows that $(r^*,\mathbb{P}^{k})$ is the algebraic reduction of $W_{d_\lambda}.$
\end{remark}

     \section{Hermitian metrics}\label{sec: Metrics}

Due to their topological restrictions, Hopf manifolds cannot support a K\" ahler metric. Therefore, a relevant question for Hermitian non-K\" ahler geometry is what kind of special metrics Hopf manifolds possess. We recall briefly some of the definitions of these special geometries and we shall focus more on the one which has been greatly linked to their study, which is {\em locally conformally K\" ahler (lcK) geometry}. 

\subsection{lcK and Vaisman metrics}

A Hermitian metric $\omega$ on a complex manifold $X$ is called {\em locally conformally K\" ahler} (lcK) if $d\omega=\theta \wedge \omega$ for a closed one-form $\theta$, called the Lee form of the metric. If moreover $\theta$ is parallel with respect to the Levi-Civita connection of the metric, $\omega$ is called {\em Vaisman}.

The existence of lcK metrics on Hopf manifolds has a long history, going back to Vaisman \cite{V}, who proved than on a diagonal Hopf manifold $W_{d_{\lambda}}$ with $|\lambda_1|= \ldots = |\lambda_n|$, the Hermitian metric induced by 
$$\omega=\frac{i\del\overline{\del}||z||^2}{||z||^2}$$
is Vaisman. In fact, this is the only case where an explicit lcK metric can be written down on a Hopf manifold. Next, Gauduchon and Ornea \cite{go} proved that any Hopf surface admits lcK metrics. On the general diagonal Hopf surface, they construct an lcK metric of the form $\omega=f^{-1}i\del\overline{\del}f$, where the potential $f$ is given as the implicit unique solution of some functional equation, and then by direct computation they infer that this metric is Vaisman. On non-diagonal Hopf surfaces, the existence of the metric is inferred via a deformation argument. A different deformation argument for constructing lcK metrics on Hopf surfaces was used by Belgun \cite{bel}, who also showed that non-diagonal Hopf surfaces are lcK, but never Vaisman.

Since then, many different arguments for the existence of lcK metrics on primary Hopf manifolds of any dimension have been given. For instance the direct generalisation of \cite{go} to any diagonal Hopf manifolds appears in  \cite{ko} (see also \cite[Sect. 2.6.1]{nico}), where the authors also show that the constructed metric is Vaisman. One can then use a deformation argument of \cite{ov10} and the isotrivial family of \ref{deformation} in order to deduce the existence of lcK metrics on any non-diagonal Hopf manifold. Other works of Ornea-Verbitsky also construct  such metrics using more involved arguments (see \cite{ov16} for linear Hopf manifolds and \cite{ov23} for non-linear Hopf manifolds). 

A unifying point of view for the existence of lcK metrics on any primary Hopf manifold, diagonal or not, appears in an argument of  Brunella \cite{bru} formulated for Kato surfaces. This construction uses the fact that a conformal class of an lcK metric on $X$ is equivalent to a conformal class of a K\"ahler metric on the universal cover $\tilde X$ on which the deck group acts by homotheties. Without going into the definition of a Kato manifold, let us make explicit the argument from \cite{bru} for Hopf manifolds:

\begin{theorem}(Brunella)\label{Brunella}
    Every primary Hopf manifold carries lcK metrics.
\end{theorem}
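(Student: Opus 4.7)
The plan is to use the standard correspondence between lcK metrics on $W_\gamma$ and K\"ahler metrics on the universal cover $W = \C^n \setminus \{0\}$ on which the deck transformation $\gamma$ acts by a positive homothety. More precisely, to produce an lcK metric on $W_\gamma$ it suffices to exhibit a smooth positive strictly plurisubharmonic function $\phi\colon W \to (0,\infty)$ and a constant $c > 0$ with $\gamma^{*}\phi = c\phi$. Indeed, then $\tilde\omega_0 := i\del\delb\phi$ is a K\"ahler form on $W$ satisfying $\gamma^{*}\tilde\omega_0 = c\,\tilde\omega_0$, the rescaled form $\tilde\omega := \phi^{-1}\tilde\omega_0$ is positive and $\gamma$-invariant and hence descends to a Hermitian form $\omega$ on $W_\gamma$, and a direct computation gives $d\tilde\omega = -d\log\phi\wedge\tilde\omega$. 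Since $d\log\phi$ is closed and $\gamma$-invariant (as $\gamma^{*}\log\phi = \log\phi + \log c$), it descends to a closed one-form on $W_\gamma$, giving $\omega$ the structure of an lcK metric with Lee form $-d\log\phi$.

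To construct such a $\phi$, I would average a local psh potential over the $\Z$-action generated by $\gamma$. Since $\gamma$ is a contraction, one can find a relatively compact open annular fundamental domain $A \subset W$ for $\langle\gamma\rangle$ such that the translates $\gamma^k(\overline{A})$, $k \in \Z$, tile $W$ and only finitely many of them meet any given compact subset of $W$. Taking $\psi(z) := \|z\|^2$ and a cutoff $\chi \in C^\infty_c(W)$ equal to $1$ on $\overline{A}$ and supported in a slightly larger neighbourhood, I would fix $c \in (0,1)$ and define
\[
\phi(z) := \sum_{k \in \Z} c^{k}\,(\chi\psi)\bigl(\gamma^{-k}(z)\bigr).
\]
The sum is locally finite and defines a smooth positive function on $W$, and a shift of index yields $\phi\circ\gamma = c\phi$, which is the required scaling property.

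The main obstacle is verifying that $\phi$ is \emph{strictly} plurisubharmonic, since the cutoff contributes indefinite terms of the form $\del\chi\wedge\delb\psi$ and $\psi\,\del\delb\chi$ in the overlap regions between neighbouring fundamental domains. I expect the cleanest way to handle this to be Brunella's device of choosing the cutoff adapted to the dynamics: write $\chi = \rho\circ\ell$ for a smooth real-valued function $\ell\colon W \to \R$ satisfying $\ell\circ\gamma = \ell - 1$ (easily produced from a smooth exhaustion of $W$ since $\gamma$ is a contraction), so that the derivatives of $\chi\circ\gamma^{-k}$ are uniformly controlled across the tiling and the positive contributions $c^{k}\chi\cdot(\gamma^{-k})^{*}(i\del\delb\psi)$ from adjacent indices aggregate constructively. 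Finally, by rescaling $\psi$ or adding a small $\gamma$-equivariant strictly psh correction, one arranges that the positive Hessian dominates the cutoff losses at every point of $W$. Once strict plurisubharmonicity is established, the dictionary above produces the desired lcK metric $\omega$ on $W_\gamma$; crucially, no appeal to the Poincar\'e--Dulac normal form of $\gamma$ is used, so the same argument works uniformly for diagonal and non-diagonal Hopf manifolds.
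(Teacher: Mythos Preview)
Your setup is correct: an lcK metric on $W_\gamma$ is exactly a K\"ahler metric on $W$ on which $\gamma$ acts by a homothety, and producing a strictly psh $\phi>0$ with $\gamma^*\phi=c\phi$ would finish the job. The gap is precisely where you flag it: the averaging construction $\phi=\sum_k c^k(\chi\psi)\circ\gamma^{-k}$ has no reason to be strictly plurisubharmonic. Multiplying a psh function by a cutoff destroys psh-ness in the transition region, and your suggested remedies do not repair this. Writing $\chi=\rho\circ\ell$ with $\ell\circ\gamma=\ell-1$ gives no control on the sign of $i\del\delb\chi$ or the cross terms $i\del\chi\wedge\delb\psi$; these are governed by the complex Hessian of $\ell$, which is arbitrary. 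The claim that adjacent positive terms ``aggregate constructively'' is exactly what needs proof, and the proposal to ``add a small $\gamma$-equivariant strictly psh correction'' is circular, since such a correction is precisely a $\phi$ of the kind you are trying to build.

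The paper's (Brunella's) argument avoids cutoffs entirely by using the \emph{regularised maximum}, which is the standard device for gluing strictly psh functions while preserving strict psh-ness. Concretely: choose $s<1$ with $\gamma(\overline{\mathbb B}_1)\subset\mathbb B_s$, set $f_0=\|z\|^2$ and $f=f_0\circ\gamma^{-1}$, and pick constants $A>0$, $c\in\R$ so that $Af_0+c>f$ on $\partial\mathbb B_1$ and $Af_0+c<f$ on $\partial\mathbb B_s$. Then the regularised maximum $\psi$ of $Af_0+c$ and $f$ is smooth and strictly psh on $\mathbb B_1\setminus\{0\}$, equals $Af_0+c$ near $\partial\mathbb B_1$ and equals $f$ near $\gamma(\partial\mathbb B_1)$, so the K\"ahler form $i\del\delb\psi$ on the fundamental domain $\mathbb B_1\setminus\gamma(\mathbb B_1)$ matches its $\gamma$-translate up to the homothety factor $A^{-1}$ along the common boundary. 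This is the missing idea: replace the partition-of-unity patching by a regularised-max patching.
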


\begin{proof} 
Let $W_\gamma$ be the $n$-dimensional Hopf manifold in question, and for any $r>0$, let us denote by $\mathbb{B}_r=\{z\in\C^n,||z||^2<r\}.$ Since $\gamma$ is a contraction, we can pick $0<s<1$ so that $\gamma(\overline{\mathbb{B}}_1)\subset\mathbb{B}_s$.

Let $f_0(z):=||z||^2$, $f(z):=f_0\circ\gamma^{-1}$. Note that as $f$ is strictly plurisubharmonic, via the maximum principle we have that  $m:=\min_{\partial\mathbb{B}_s}f<M:=\max_{\partial\mathbb{B}_1}f$, thus for any $A>\frac{M-m}{1-s^2}>0$ there exists $c\in (M-A,m-As^2)$ and we have $A\cdot f_0+c>f$ on $\partial\mathbb{B}_1$ and $A\cdot f_0+c<f$ on $\partial\mathbb{B}_s$. Thus the function $\psi$ defined as the regularised maximum between $A\cdot f_0+c$ and $f$ on $\mathbb{B}_1-\mathbb{B}_s$ and equal to $f$ on $\mathbb{B}_s$ is smooth and defines a K\"ahler metric $\omega:=i\partial\overline{\partial}\psi$. Moreover, since $\gamma^*\psi|_{\gamma(\partial\mathbb{B}_1)}=f_0|_{\partial\mathbb{B}_1}$, we have that
$$\gamma^*\omega|_{\gamma(\partial\mathbb{B}_1)}=\frac{1}{A}\omega|_{\partial\mathbb{B}_1}.$$
Since $\mathbb{B}_1-\gamma(\mathbb{B}_1)\subset W$ is a fundamental domain for the action of $\Gamma:=\langle\gamma\rangle$ on $W$, it follows that the metric $\omega$ corresponds to the conformal class of an lcK metric on $W_\gamma$. \end{proof}

In order to detect whether a given lcK manifold can be Vaisman or not, a useful tool is the so called Lee group. For a Vaisman metric $(\omega,\theta)$  on $X$, the smooth vector field which is metric dual to $\theta$, $B:=\theta^{\sharp_g}\in\mathcal{C}^\infty(TX)$, together with $JB$, generate a complex one-dimensional Lie group acting by biholomorphic isometries 
$$G=\exp\langle B, JB\rangle\subset\mathrm{Aut}_0(X,\omega)$$ called \textit{the Lee group} of $X$. If $X$ is compact, then $\mathrm{Aut}_0(X,\omega)$ is a compact real Lie group, so that we can also define the compact Lie group $H:=\overline{G}\subset\Aut_0(X,\Omega)$ as the closure of $G$. As $G$ is abelian, $H$ is a real compact torus of dimension at least $2$. Let us denote by $\Lh$ the Lie algebra of $H$. Note that $\Lg=\langle B,JB\rangle\subset\Lh\cap J\Lh\neq \{0\}$ and in fact we have equality here, by \cite[Prop. 3]{is}. More specifically, a compact lcK manifold bears a Vaisman metric if and only if it bears a complex Lie group $G\subset\Aut_0(X)$ which is contained in a real compact Lie subgroup $H\subset\Aut_0(X)$. Moreover, if it exists, such a group is unique. 

This, together with \ref{prop: HLee},  implies: 

\begin{proposition}
    A primary Hopf manifold $W_\gamma$ bears a Vaisman metric if and only if $\gamma$ is equivalent to a diagonal automorphism $d_\lambda$. Moreover, in this case, the Lee vector field is a positive multiple of 
    $$B=\Re \left(\sum_{j=1}^n\ln |\lambda_j|z_j\frac{\del}{\del z_j}\right).$$
\end{proposition}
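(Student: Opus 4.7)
The plan is to combine the Lee-group characterization of Vaisman metrics recalled above with \ref{prop: HLee}.

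For the necessity, if $W_\gamma$ admits a Vaisman metric with Lee vector field $B$, then $\exp\langle B, JB\rangle\subset\Aut_0(W_\gamma)$ is a complex one-dimensional Lie subgroup sitting inside a real compact torus, with $\langle B, JB\rangle\subseteq\Lh\cap J\Lh\neq\{0\}$. The first part of \ref{prop: HLee} then rules out the non-diagonal case, forcing $\gamma$ to be equivalent to some $d_\lambda$ with $\lambda\in(\mathbb{D}^*)^n$. Conversely, when $\gamma\sim d_\lambda$, the Kamishima--Ornea construction \cite{ko} recalled above provides a Vaisman metric on $W_{d_\lambda}$.

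To identify $B$ in the diagonal case, the second part of \ref{prop: HLee} forces $\langle B, JB\rangle=\langle\Re Z,\,J\Re Z\rangle$ for $Z:=\sum_{j=1}^n\ln|\lambda_j|z_j\frac{\del}{\del z_j}$, so we may write $B=a\Re Z+b\,J\Re Z$ and $JB=-b\Re Z+a\,J\Re Z$ with $a,b\in\R$; the task is to show $a>0$ and $b=0$. Pick the Vaisman potential $\varphi$ on the cover (so that $\theta=d\varphi$) to be $(S^1)^n$-invariant, a choice provided directly by the Kamishima--Ornea construction. Since $J\Re Z$ generates a one-parameter subgroup of the compact torus $(S^1)^n\subset\Aut_0(W_{d_\lambda})$, one has $\theta(J\Re Z)=(J\Re Z)\varphi=0$. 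The standard Vaisman identities $\theta(B)=|B|_g^2>0$ (a positive constant, since $\theta$ is parallel) and $\theta(JB)=g(B,JB)=0$ then read $a\theta(\Re Z)=|B|_g^2$ and $-b\theta(\Re Z)=0$. The first forces $a\neq 0$ and consequently pins $\theta(\Re Z)=|B|_g^2/a$ to a nonzero constant on $W$; the second then forces $b=0$.

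To determine the sign of $a$, use the deck transformation: $d_\lambda=\Phi^1_{\Re Z}\circ R$ for some rotation $R\in(S^1)^n$, while $d_\lambda^*\varphi=\varphi+c_\gamma$ with $c_\gamma=\int_\gamma\theta>0$ since $d_\lambda$ is a contraction on $W$. Torus-invariance of $\varphi$ gives $(\Phi^1_{\Re Z})^*\varphi=\varphi+c_\gamma$, and integrating the constant $\theta(\Re Z)=(\Re Z)\varphi$ along the $\Re Z$-flow yields $\theta(\Re Z)=c_\gamma>0$, hence $a=|B|_g^2/c_\gamma>0$, proving $B=a\Re Z$ for a positive constant $a$. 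The main subtlety lies in the availability of a torus-invariant Vaisman potential, which separates the ``scaling" direction $\Re Z$ from the ``rotation" direction $J\Re Z$; once this is granted, the whole sign analysis reduces to a direct manipulation of the Vaisman identities together with the contracting behaviour of $d_\lambda$ on the Kähler cover.
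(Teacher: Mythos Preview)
Your argument follows the same route as the paper's: invoke \ref{prop: HLee} both to exclude the non-diagonal case and to identify the complex line $\langle Z\rangle_\C$, then use the Vaisman identities $\theta(B)>0$ and $\theta(JB)=0$ to single out the real direction of $B$ inside $\langle\Re Z,J\Re Z\rangle$. The paper dispatches this last step in one line by recording that $\theta$ lies in the class of $-d\ln|z|^2$, whereas you unpack it via an explicit potential; the substance is the same.

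One point is worth flagging. By appealing to the $(S^1)^n$-invariant Kamishima--Ornea potential you are, strictly speaking, computing the Lee field of \emph{that} Vaisman metric, while the statement concerns an arbitrary Vaisman metric on $W_{d_\lambda}$. The paper's cohomological phrasing is meant to cover this, and your argument upgrades with a small observation: for any Vaisman metric one has $\Re Z,J\Re Z\in\Lg\subset\Lh$, so $\theta(\Re Z)$ and $\theta(J\Re Z)$ are constants; since the $J\Re Z$-flow on $W$ stays inside a compact $(S^1)^n$-orbit while the potential $\varphi$ would grow linearly along it, one gets $\theta(J\Re Z)=0$ without invoking the specific KO construction, and your deduction $b=0$ goes through. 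The positivity of $\theta(\Re Z)$ then follows from the Lee class being a positive multiple of the standard generator of $H^1(W_{d_\lambda},\R)$, which is the content of your contraction argument and of the paper's ``cohomologous to $-d\ln|z|^2$''.
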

\begin{proof}
The first part is contained in \ref{prop: HLee}. In particular, in the diagonal case it was shown that $\Lg=\Lh\cap J\Lh=\langle Z\rangle_\C$, where
\[Z=\sum_{j=1}^n\ln |\lambda_j|z_j\frac{\del}{\del z_j}.\]
Since a Lee vector field $B$ must satisfy $\theta(B)>0$ and $\theta(JB)=0$ for a one-form $\theta$ cohomologuous to $-d\ln|z|^2$, the only possibility for $B$ is the one given.

\end{proof}

\begin{remark}
    The above results only discuss the existence of lcK metrics on primary Hopf manifolds. However, the deformation result of \cite{ov10} together with \ref{deformation} ensure the existence of \textit{lcK metrics with potential} on these manifolds, i.e. metrics of the form $\omega=f^{-1}i\partial\overline{\partial}f$ on $\tilde X=W$ so that the deck group acts by homotheties on $f$. Since for a general Hopf manifold $X$ one has, by \cite[Thm. 2.3]{ha}, that $\pi_1(X)\cong\Gamma\ltimes F$, with $\Gamma<\Aut(W)$ infinite cyclic and $F\triangleleft \pi_1(X)$ finite, 
   one can first construct an lcK metric with potential on $W/\Gamma$ and then average the smooth, positive, strictly plurisubharmonic function $f$ to an $F$-invariant one of the same kind $h$ on $W$. This then allows one to descend the lcK metric $h^{-1}i\partial\overline{\partial}h$  from $W$ directly to $X$. 
\end{remark}

\subsection{Non-existence of other distinguished metrics}

If a Hermitian metric $\omega$ satisfies $\partial\overline{\partial}\omega=0$, then $\omega$ is called {\em pluriclosed}. This condition coincides with the notion of {\em Gauduchon} metric in complex dimension 2, therefore such metrics always exist on surfaces by \cite{Gaud}. A Hermitian metric $\omega$ on an $n$-dimensional complex manifold is called {\em balanced} if $d\omega^{n-1}=0$. More generally, $\omega$ is called {\em strongly Gauduchon} if $\partial \omega^{n-1}$ is $\overline{\partial}$-exact. 
 
The problem of existence of several special metrics can be solved easily with a negative answer if the Hopf manifold is Vaisman, as  shown in \cite{ao}. Even though most of the arguments cannot be transfered in general from the Vaisman case to any Hopf manifold, there are still neat results about Hopf manifolds in general. The following two results rule out the existence of pluriclosed and strongly Gauduchon metrics. 

\begin{theorem} Hopf manifolds of complex dimension larger than $2$ do not admit pluriclosed metrics.
\end{theorem}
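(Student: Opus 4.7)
The plan is to argue by contradiction, assuming $\omega$ is a pluriclosed metric on $W_\gamma$ of dimension $n\geq 3$. The starting point is the pointwise identity, valid for any real $(1,1)$-form,
\[\partial\overline{\partial}\omega^{n-1} = (n-1)\omega^{n-2}\wedge\partial\overline{\partial}\omega + (n-1)(n-2)\,\omega^{n-3}\wedge\partial\omega\wedge\overline{\partial}\omega,\]
which under the hypothesis $\partial\overline{\partial}\omega=0$ reduces to $\partial\overline{\partial}\omega^{n-1} = (n-1)(n-2)\,\omega^{n-3}\wedge\partial\omega\wedge\overline{\partial}\omega$. Since $\partial\overline{\partial}\omega^{n-1}=d(\overline{\partial}\omega^{n-1})$ is $d$-exact, Stokes' theorem on the compact manifold $W_\gamma$ produces the global identity
\[\int_{W_\gamma}\omega^{n-3}\wedge\partial\omega\wedge\overline{\partial}\omega = 0.\]

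I would then unpack the integrand using the Lefschetz decomposition $\partial\omega=\tau+\omega\wedge\beta$, where $\tau$ is a primitive $(2,1)$-form and $\beta\in A^{1,0}(W_\gamma)$ is (a constant multiple of) the $(1,0)$-part of the Lee form of $\omega$. Primitivity of $\tau$ forces $\omega^{n-2}\wedge\tau = 0$ (and similarly $\omega^{n-2}\wedge\overline{\tau} = 0$), so all cross-terms drop and
\[\omega^{n-3}\wedge\partial\omega\wedge\overline{\partial}\omega = \omega^{n-3}\wedge\tau\wedge\overline{\tau} + \omega^{n-1}\wedge\beta\wedge\overline{\beta}.\]
By the Hodge-Riemann bilinear relations, each summand is pointwise a nonnegative multiple of $\pm i\cdot\omega^n$, but with \emph{opposite} signs of $i$ for the primitive $(2,1)$-piece and the $(1,0)$-piece. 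Thus the global identity only delivers a balance between weighted $L^2$-norms of $\tau$ and $\beta$, rather than pointwise vanishing.

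The main obstacle is closing this gap. To do so I would exploit the specific geometry of $W_\gamma$: by \ref{deformation}, $W_\gamma$ is the generic fiber of an isotrivial family whose central fiber is the diagonal Hopf manifold $W_{d_\lambda}$, and the latter carries a Vaisman metric (Section \ref{sec: Metrics}). Combining deformation stability of pluriclosed metrics (following \cite{pop14,c}) with averaging a hypothetical pluriclosed metric on $W_{d_\lambda}$ under the maximal torus $\mathbb{T}^n\subset\Aut_0(W_{d_\lambda})$ (which preserves the pluriclosed condition), the resulting torus-invariant metric must be compatible with the rigid Vaisman Lee form structure. This forces $\tau=0$ and $\beta=0$, hence $\partial\omega=0$, so $\omega$ would be Kähler---contradicting the well-known fact that $b_1(W_\gamma)=1$ and therefore $W_\gamma$ admits no Kähler structure.
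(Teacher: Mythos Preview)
Your initial computation is correct: for $n\geq 3$, the pluriclosed condition together with Stokes yields $\int_{W_\gamma}\omega^{n-3}\wedge\partial\omega\wedge\overline{\partial}\omega=0$, and the Lefschetz/Hodge--Riemann analysis shows this integral is a difference of two nonnegative quantities rather than a sum, so nothing is yet concluded. You correctly identify this as the obstacle.

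The way you propose to close the gap, however, does not work. First, ``deformation stability of pluriclosed metrics'' in the direction you need---passing from the generic fibre $W_\gamma$ to the special fibre $W_{d_\lambda}$ of the family in \ref{deformation}---is a \emph{closedness} statement for the SKT property under degeneration, and no such result is contained in \cite{pop14} or \cite{c}; closedness of SKT is not known in general and cannot simply be invoked. Second, even granting a torus-invariant pluriclosed metric on $W_{d_\lambda}$, you give no argument for why invariance forces $\tau=0$ and $\beta=0$. The phrase ``must be compatible with the rigid Vaisman Lee form structure'' is not a proof: the averaged pluriclosed metric has a priori no relation to the Vaisman metric beyond sharing the same torus symmetry, and torus-invariance alone does not kill either the primitive part of $\partial\omega$ or its Lee-form component.

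The paper's argument (following \cite{c}) sidesteps all of this with a short cohomological trick that works directly on any primary Hopf manifold, with no metric estimates or deformation input. From \ref{dolbeault} one has $H^{2,1}_{\overline\partial}(W_\gamma)=H^{3,0}_{\overline\partial}(W_\gamma)=0$. Since $\partial\omega$ is $\overline\partial$-closed, the first vanishing gives $\partial\omega=\overline\partial\sigma$ for some $\sigma\in\Omega^{2,0}(W_\gamma)$; the second forces $\partial\sigma=0$. Then $\Omega:=\omega-\sigma-\overline\sigma$ is a closed real $2$-form whose $(1,1)$-part is $\omega$, hence $\Omega(V,JV)>0$ and $\Omega$ is symplectic. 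But $H^2(W_\gamma,\R)=0$, a contradiction.
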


\begin{proof} The proof rests purely on cohomological properties. 
Moreover, it is clearly enough to establish the result on primary Hopf manifolds.  We reproduce briefly the argument in \cite[Thm. 5.16]{c}. By \ref{dolbeault}, any primary Hopf manifold $X$ satisfies $H^{2, 1}_{\overline{\partial}}(X)=H^{3, 0}_{\overline{\partial}}(X)=0$. If a pluriclosed metric $\omega$ existed, then $\partial \omega=\overline{\partial}\sigma$ for some $\sigma \in \Omega^{2, 0}(X)$ with $\partial \sigma \in H^{3,0}_{\overline{\partial}}(X)=0$. This amounts to $\Omega:=\omega - \sigma - \overline{\sigma}$ being a closed 2-form. Moreover,  as $\Omega(V, JV)>0$ for any vector field $V$, $\Omega$ is non-degenerate, hence a symplectic form. However this cannot exist on $X$, as $H^2(X)=0$.
\end{proof}

\begin{theorem} Hopf manifolds do not admit strongly Gauduchon metrics. In particular, they cannot be balanced. 
\end{theorem}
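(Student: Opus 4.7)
The plan is to reduce to the primary case and then derive a contradiction from the vanishing of the second Betti number together with the existence of a compact complex hypersurface in every primary Hopf manifold.

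First I would reduce to a primary Hopf manifold $W_\gamma$, using that the strongly Gauduchon condition is preserved under pullback by a finite cover (since $\partial$ and $\overline{\partial}$ commute with pullback). By \ref{normalform} I may further assume $\gamma$ is in upper-triangular normal form, so that its $n$-th component satisfies $\gamma_n(z) = \lambda_n z_n$; then $\{z_n = 0\}$ is $\gamma$-invariant and descends to a compact complex hypersurface $Y \subset W_\gamma$ (itself an $(n-1)$-dimensional Hopf manifold). By \ref{dolbeault} combined with the Frölicher degeneration, $b_2(W_\gamma) = 0$, so the $d$-closed current of integration $[Y]$ is $d$-exact in the sense of currents: $[Y] = dS$ for some $1$-current $S$.

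The heart of the argument is to modify $\omega^{n-1}$ into a smooth $d$-closed $(2n-2)$-form whose integral over $Y$ remains strictly positive. Writing the strongly Gauduchon condition as $\partial\omega^{n-1} = \overline{\partial}\eta$ for some smooth $(n, n-2)$-form $\eta$, I would set
\[ \tilde\omega := \omega^{n-1} - \eta - \overline{\eta}. \]
A short bidegree check — using that $\partial\eta$ and $\overline{\partial}\,\overline{\eta}$ vanish for purely formal reasons, while $\overline{\partial}\,\omega^{n-1} = \partial\overline{\eta}$ by complex conjugation of the SG condition — shows $d\tilde\omega = 0$. Since $\eta$ has bidegree $(n, n-2)$ and $\overline{\eta}$ has bidegree $(n-2, n)$, both restrict to zero on the complex $(n-1)$-fold $Y$, hence $\int_Y\tilde\omega = \int_Y\omega^{n-1} > 0$. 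On the other hand, the duality pairing with $[Y] = dS$ and the closedness of $\tilde\omega$ yield $\int_Y\tilde\omega = \langle dS,\tilde\omega\rangle = \pm\langle S, d\tilde\omega\rangle = 0$, a contradiction. The balanced case is immediate since $d\omega^{n-1} = 0$ implies the SG condition with $\eta = 0$.

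The only nontrivial point is spotting the modification $\omega^{n-1} - \eta - \overline{\eta}$ that converts the strongly Gauduchon condition into a closed cohomological quantity; everything else is bidegree bookkeeping together with the vanishing $b_2(W_\gamma) = 0$ already established in \ref{dolbeault}.
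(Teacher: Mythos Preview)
Your proof is correct and follows the same underlying idea as the paper: use the hypersurface $Y=\{z_n=0\}$ together with $H^2(W_\gamma)=0$ to obstruct strongly Gauduchon metrics. The difference is that the paper invokes Popovici's general characterization \cite[Prop.~3.3]{pop09} --- a compact complex manifold admits a strongly Gauduchon metric if and only if it carries no non-zero $d$-exact positive $(1,1)$-current --- and then simply observes that $[Y]$ is such a current. You instead unpack the easy direction of that criterion by hand: from the SG condition $\partial\omega^{n-1}=\overline\partial\eta$ you build the closed $(2n-2)$-form $\tilde\omega=\omega^{n-1}-\eta-\overline\eta$ and pair it against the exact current $[Y]=dS$ to derive a contradiction. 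This makes your argument self-contained, at the cost of hiding the general principle; the paper's version is shorter but relies on an external reference whose proof (in the direction actually used) is essentially what you wrote out.
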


\begin{proof} 
Again it is enough to show this for primary Hopf manifolds.
The argument is given in \cite[Section 2]{pop14}, by using the characterization of existence of a strongly Gauduchon metric in terms of currents. As proven in \cite[Prop. 3.3]{pop09}, a complex manifold carries a strongly Gauduchon metric if and only if there exists no non-zero current $T$ of type $(1, 1)$ such that $T \geq 0$ and $T$ is $d$-exact. However, such a current exists on a primary Hopf manifold $X=W_\gamma$. Indeed, if $\gamma$ is in normal form and $n=\dim_\C X$, then $\{z_n=0\}$ defines a hypersurface in $W_\gamma$,  therefore its current of integration gives precisely a current obstructing the existence of strongly Gauduchon metrics, as $H^2(X)=0$. 
\end{proof}

\end{document}